\begin{document}
\newtheorem{thm}{Theorem}
\newtheorem{Def}{Definition}
\newtheorem{lem}{Lemma}
\newtheorem{cor}{Corollary}
\newtheorem{rem}{Remark}
\newtheorem{note}{Note}
\newtheorem{Result}{Result}
\begin{center}
\Large{\bf On the Secure Vertex Cover Pebbling Number}
\end{center}      
\begin{center}
Glenn H. Hurlbert$^{[1]}$, Lian Mathew$^{[2]}$, Jasintha Quadras$^{[3]}$, S. Sarah Surya$^{[4]}$,  \\
$^{[1]}$Department of Mathematics and Applied Mathematics\\
Virginia Commonwealth University, Virginia, USA\\
$^{[2,4]}$Department of Mathematics\\
Stella Maris College(Autonomous), Chennai, India\\
Affiliated to the University of Madras, India\\
E-mail: lianmathew64@gmail.com\\
$^{[3]}$Department of Mathematics\\
Maris Stella College(Autonomous), Vijayawada, India\\
\end{center}              
\begin{abstract}
A new graph invariant called the secure vertex cover pebbling number, which is a combination of two graph invariants, namely, `secure vertex cover' and `cover pebbling number', is introduced in this paper.  The secure vertex cover pebbling number of a graph $G$, is the minimum number $m$ so that every distribution of $m$ pebbles can  reach some secure vertex cover of $G$ by a sequence of pebbling moves. In this paper, the complexity of the secure vertex cover problem and secure vertex cover pebbling problem are discussed. Also, we obtain some basic results and the secure vertex cover pebbling number for complete $r$- partite graphs, paths, Friendship graphs, and wheel graphs. 
\end{abstract}

\textbf{Keywords:} Graph pebbling, Secure vertex cover, Cover pebbling number\\

\noindent\textbf{2010 Subject Mathematics Classification:} 05C38, 05C70, 05B40

\section{Introduction}
Graph pebbling is one of the rapidly developing areas of research in Graph theory, suggested by Lagarias and Saks and introduced into the literature by F.R.K Chung. Let $G(V, E)$ be a simple connected graph. A distribution $C$ of pebbles on $G$ is defined as a function $C: V \rightarrow \mathscr{N}$, the non-negative integers. A pebbling move is defined as the removal of two pebbles from one vertex and placing one on an adjacent vertex. Then we define the pebbling number of a graph $G$, $f(G)$, is the least $n$ such that, from any distribution of $n$ pebbles on $V(G)$, we can place a pebble to any of the root vertex by a sequence of pebbling moves\cite{CHy}. For a survey of additional results, refer\cite{{HSu},{Hu13},{GrJoJa03},{HuFr00}}. Also, other varations  of graph pebbling can be found in \cite{ {GDom},{BuChCr08},{ChHaHe17},{BeBrCz03},{Kn19},{GaMa21},{SaLiMa},{CCov}, {Cov}, {LoDhKi22}, {SLSTec}}.

\subsection{Definitions}
\begin{Def}
\noindent A \textbf{vertex cover} of a graph $G$ is a subset $K$ of $V$ such that every edge of $G$ has at least one end in $K$\cite{Sec}.
\end{Def}
\begin{Def}
A \textbf{secure vertex cover} is a set $S\subseteq V,$ such that $S$ is a vertex cover and for each $u\in V$, there exists a $v\in S\cap N(u)$ such that $(S\backslash \{v\})\cup\{u\}$ is a vertex cover. The minimum number of vertices in the set $S$ is called the \textbf{secure vertex cover number} denoted by $\alpha_{s}(G)$\cite{Sec}.
\end{Def}
\noindent We say that a distribution $C$ can reach a set of vertices $S$ if one can make a sequence of pebbling moves from $C$ that result in placing at least one pebble on each vertex of $S$.
\begin{Def}
\noindent The \textbf{cover pebbling number} denoted by $\gamma(G)$ of a graph $G$ is the minimum number $m$ so that every distribution of $m$ pebbles can reach $V(G)$\cite{CCov}.
\end{Def}
\begin{Def}
\noindent The \textbf{covering cover pebbling number} denoted by $\sigma(G)$ is the minimum number $m$ so that every distribution of $m$ pebbles can reach some vertex cover of $G$\cite{Cov}. 
\end{Def}
\noindent In this paper, we introduce the following new graph invariant. 
\begin{Def}
The \textbf{secure vertex cover pebbling number} of a graph $G$, denoted by  $f_{svcp}(G)$, is the minimum number $m$ so that every distribution of $m$ pebbles can  reach some secure vertex cover of $G$.
\end{Def}

\noindent Note that a secure vertex cover for a given graph $G$ may not be unique which makes this problem quite challenging.\\
 
\noindent For example, Figure 1 displays eight different secure vertex covers for the path $P_6$. These are all the secure vertex covers for $P_6$ up to symmetry and the possibilities  are 1245, 1246, 1345, 1346, 1356,  2345, 2346, 2356. Here, finding the minimum number of pebbles required to place one pebble on the secure vertex cover set under any distribution of pebbles to the vertices of path $P_6$ makes this problem a difficult one.
\begin{figure}[h!]
\centering
\includegraphics[width=7 cm,height=7cm]{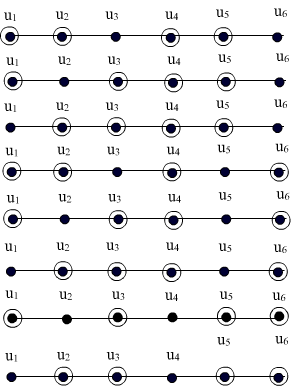}
\caption{Secure vertex cover sets of $P_6$}
\label{Sel}
\end{figure}
\begin{Def}
Let $G_1(V_1,E_1)$ and $G_2(V_2,E_2)$ be a connected simple graph. Then
$G_1\cup G_2$ is the graph $G(V, E)$ where $V= V_1\cup V_2$ and $E=E_1\cup E_2$ and $G_1 + G_2$ is $G_1\cup G_2$ together with the edges joining elements of $V_1$ to elements of $V_2$\cite{BMG}.
\end{Def}

\noindent 
The following provides the impetus for us to introduce this topic: Let the villages(hamlets) of a city be denoted by the vertices and let two vertices be connected by an edge if there is a road that runs between them. The bare minimum of gas stations needed in that city can then be found by finding the secure vertex cover of the obtained graph. As the fuel is wasted during transportation as heat or energy, by determining the secure vertex cover pebbling number, we can determine the least amount of gasoline needed for the city.\\

\noindent In this paper,  some basic results, the complexity and the secure vertex cover pebbling number for certain families of graphs such as complete $r$- partite graph, path $P_n$, friendship graph $F_{n}$ and wheel graph $W_n$ are determined.

\section{Results}
The following three results are straightforward.
\begin{thm}
 For a simple connected graph $G$ with diameter $d$, $n-1\leq f_{svcp}(G)\leq (n-1)2^{d-1}$ and the equality holds for $K_2$.
\end{thm}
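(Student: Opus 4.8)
The plan is to treat the two inequalities separately, using throughout the structural observation that $V(G)\setminus\{w\}$ is a secure vertex cover for every vertex $w$. Write $n=|V(G)|$.

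For the lower bound I would exhibit a single bad distribution of $n-2$ pebbles that cannot reach any secure vertex cover. Fix any edge $ab\in E(G)$ and place exactly one pebble on each of the remaining $n-2$ vertices, leaving $a$ and $b$ empty. Since every vertex then carries at most one pebble, no pebbling move is available, so the only reachable configuration is the starting one, whose support is $V(G)\setminus\{a,b\}$. This set omits both ends of the edge $ab$ and hence is not even a vertex cover; consequently no secure vertex cover is reachable. Therefore $n-2$ pebbles never suffice, giving $f_{svcp}(G)\ge n-1$.

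For the upper bound I would first verify the claim that, for any connected $G$ with $n\ge 2$ and any $w\in V(G)$, the set $S_w=V(G)\setminus\{w\}$ is a secure vertex cover: it is a vertex cover because $G$ is simple, so no edge has both ends at $w$; and the only vertex outside $S_w$ is $w$ itself, for which any neighbour $v\in N(w)$ witnesses security, since $(S_w\setminus\{v\})\cup\{w\}=V(G)\setminus\{v\}$ is again a vertex cover. Thus it suffices, for every distribution $C$ of $m=(n-1)2^{d-1}$ pebbles, to locate a single vertex $w$ so that $C$ can cover all of $S_w$. Here I would choose $w$ \emph{adaptively} from $C$ — for instance the vertex that is most expensive to reach — and then route pebbles to the remaining $n-1$ targets, invoking the known stacking (weight-function) principle for cover pebbling to reduce the worst case to a single concentrated stack and then estimating the cost of covering the $n-1$ vertices of $S_w$ from such a stack.

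The main obstacle is precisely this cost estimate. Since $\mathrm{diam}(G)=d$, a naive count charges up to $2^{d}$ to reach each covered vertex, which only yields the weaker bound $(n-1)2^{d}$; the crux is to show that the adaptive choice of the omitted vertex $w$ — absorbing the single most distant, and hence most expensive, target — is what recovers the factor of two and brings the bound down to $(n-1)2^{d-1}$. I expect justifying this saving, together with the interaction between the adaptive target and the stacking reduction, to be the delicate point of the whole argument. Finally I would check the equality case: for $K_2$ we have $n=2$ and $d=1$, so both bounds equal $1$, and a single pebble on either vertex already occupies a secure vertex cover, whence $f_{svcp}(K_2)=1$.
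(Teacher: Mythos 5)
Your lower bound argument is correct and complete: placing one pebble on each vertex except the two ends of an edge $ab$ leaves no pebbling move available, and since every secure vertex cover is in particular a vertex cover it must contain $a$ or $b$, so $n-2$ pebbles do not suffice. Your observation that $V(G)\setminus\{w\}$ is a secure vertex cover is also sound under the paper's intended reading of the definition (the security condition is checked only at vertices outside $S$, which is how the paper itself applies it in Theorems 4 and 7). Note that the paper gives no proof of this theorem at all; it is listed among results called ``straightforward,'' so there is nothing on the paper's side to compare against beyond the statement itself.

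The gap you flagged in the upper bound, however, is not merely delicate: it cannot be filled, because the inequality $f_{svcp}(G)\leq (n-1)2^{d-1}$ is false as stated. Take $G=K_n$ with $n\geq 3$: the diameter is $d=1$, so the claimed bound is $(n-1)2^{0}=n-1$, yet the paper's own Theorem 3 gives $f_{svcp}(K_n)=2n-3>n-1$. (Concretely, in $K_3$ two pebbles stacked on one vertex can never yield pebbles on two distinct vertices simultaneously, while every secure vertex cover of $K_3$ has two vertices.) The star $K_{1,n}$, $n\geq 3$, is another counterexample: by Theorem 9 its value is $4n-5$, which exceeds the claimed bound $n\cdot 2^{2-1}=2n$. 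So the factor-of-two saving you hoped to extract from the adaptive choice of the omitted vertex $w$ does not exist; your naive count of roughly $(n-1)2^{d}$ (or, via Theorem 2 and the stacking theorem for cover pebbling, $f_{svcp}(G)\leq\gamma(G)\leq 1+(n-1)2^{d}$) is essentially the right order, and the exponent $d-1$ in the stated theorem appears to be an error in the paper rather than something any completion of your strategy could reach.
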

\begin{thm}
For a simple connected graph $G$, $\sigma(G)\leq f_{svcp}(G)\leq \gamma(G)$.
\end{thm}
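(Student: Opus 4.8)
The plan is to establish the two inequalities separately, each by a monotonicity argument that compares the \emph{target families} of vertex sets appearing in the three definitions. Two purely combinatorial facts drive the proof, neither carrying any pebbling content: first, every secure vertex cover of $G$ is in particular a vertex cover; and second, the full vertex set $V(G)$ is itself a secure vertex cover. Granting these, both bounds follow from the principle that if one reachability requirement is logically stronger than another, then its pebbling threshold is no smaller.

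For the lower bound $\sigma(G)\leq f_{svcp}(G)$, I would set $m=f_{svcp}(G)$ and let $C$ be an arbitrary distribution of $m$ pebbles. By definition of $f_{svcp}$, $C$ can reach some secure vertex cover $S$. Since $S$ is in particular a vertex cover, $C$ thereby reaches a vertex cover. As $C$ was arbitrary, every distribution of $m$ pebbles reaches some vertex cover, so $m$ is admissible for the covering cover pebbling property; the minimality of $\sigma(G)$ then gives $\sigma(G)\leq m=f_{svcp}(G)$.

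For the upper bound $f_{svcp}(G)\leq \gamma(G)$, the key step is to verify that $V(G)$ is a secure vertex cover. It is trivially a vertex cover. For the security condition, fix $u\in V(G)$; since $G$ is connected on at least two vertices, $u$ has a neighbor $v\in N(u)=S\cap N(u)$ with $S=V(G)$. Because $G$ is simple (loop-free) and $u\neq v$, we have $(V(G)\setminus\{v\})\cup\{u\}=V(G)\setminus\{v\}$, and this set covers every edge, as each edge has an endpoint other than $v$. Hence $V(G)$ is a secure vertex cover. Now set $m=\gamma(G)$; every distribution of $m$ pebbles reaches $V(G)$, which is a secure vertex cover, so every such distribution reaches some secure vertex cover, and minimality of $f_{svcp}(G)$ yields $f_{svcp}(G)\leq m=\gamma(G)$.

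I expect the only genuinely delicate point to be the verification that $V(G)$ qualifies as a secure vertex cover under the stated definition, since the security clause quantifies over \emph{all} $u\in V$ (including $u\in S$) rather than only over $V\setminus S$; the argument hinges on the simple-graph hypothesis, which guarantees that deleting a single vertex from $V(G)$ always leaves a vertex cover. The two reachability reductions are then routine, as each merely transports a witnessing pebbling sequence for the stronger target to the weaker one.
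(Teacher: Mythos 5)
Your proof is correct, and it supplies exactly the argument the paper omits: the paper lists this theorem among ``the following three results are straightforward'' and gives no proof at all. Your two monotonicity reductions are the intended ones, and you correctly isolate the only point with any content --- that $V(G)$ itself satisfies the security condition, because in a simple graph every edge has two distinct endpoints, so deleting any single vertex from $V(G)$ still leaves a vertex cover. One pedantic caveat: for $G=K_1$ the vertex $u$ has $N(u)=\emptyset$, so no set can satisfy the security clause and $f_{svcp}(K_1)$ is not even defined; your hypothesis that $G$ is connected on at least two vertices quietly excludes this degenerate case, which the paper likewise ignores (its Theorem~1 already presupposes $n\geq 2$).
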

\begin{thm}
For a complete graph $K_n$ on $n$ vertices, the secure vertex cover pebbling number, $f_{svcp}(K_n)= 2n-3$.
\end{thm}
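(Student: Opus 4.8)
\section*{Proof proposal}

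The plan is to first convert this pebbling question into a purely combinatorial covering question by determining all secure vertex covers of $K_n$. Since every pair of vertices of $K_n$ is adjacent, a subset fails to cover some edge exactly when it omits two or more vertices; hence the vertex covers of $K_n$ are precisely the subsets of size $n-1$ (omitting a single vertex) together with $V$ itself. To identify the secure ones I would check the defense condition at each vertex $u$ outside $S$. For $S = V\setminus\{w\}$ the only such vertex is $w$, and for any $v\in S$ (all of which are neighbors of $w$) we get $(S\setminus\{v\})\cup\{w\}=V\setminus\{v\}$, again an $(n-1)$-subset and therefore a vertex cover; thus every $(n-1)$-subset is secure, and $V$ is secure vacuously. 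Consequently the secure vertex covers of $K_n$ are exactly the vertex subsets of size $n-1$ or $n$, and a distribution reaches one if and only if it can place a pebble on some $n-1$ of the $n$ vertices, i.e.\ leave at most one vertex uncovered.

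For the lower bound I would exhibit a bad distribution by stacking all $2n-4$ pebbles on one vertex $v$. Every move in $K_n$ removes two pebbles and delivers one across a single edge, so from $2n-4$ pebbles on $v$ one can occupy at most $n-2$ vertices: retaining one pebble on $v$ leaves $2n-5$ pebbles, enough for only $\lfloor (2n-5)/2\rfloor=n-3$ deliveries (total $n-2$), while spending all $2n-4$ pebbles on deliveries also occupies only $n-2$ vertices. Either way at most $n-2<n-1$ vertices are covered, so this distribution reaches no secure vertex cover, giving $f_{svcp}(K_n)\geq 2n-3$.

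The substance of the argument is the upper bound: every distribution $C$ of $2n-3$ pebbles covers some $n-1$ vertices. Let the occupied vertices carry $c_1\geq\cdots\geq c_k\geq 1$ with $\sum_{i=1}^k c_i=2n-3$, and let $z=n-k$ be the number of empty vertices (if $z\leq 1$ we are already done). I would first argue it is never advantageous to leave an occupied vertex uncovered: comparing $1+\lfloor (c_i-1)/2\rfloor$ (keep it, ship the surplus) against $\lfloor c_i/2\rfloor$ (ship everything) shows the former is always at least as large. Keeping all $k$ occupied vertices covered, the number of pebbles deliverable to empty vertices is $S=\sum_{i=1}^k\lfloor (c_i-1)/2\rfloor$, so the total coverage is $k+\min(z,S)$. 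Since $n-1=k+z-1$, reaching $n-1$ vertices reduces to the single inequality $S\geq z-1$.

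Finally I would prove $S\geq z-1$ by averaging: using $\lfloor (c_i-1)/2\rfloor\geq c_i/2-1$ gives $S\geq \tfrac12\sum_{i=1}^k c_i-k=n-k-\tfrac32$, and as $S$ is an integer this forces $S\geq n-k-1=z-1$, as needed. The main obstacle is exactly this upper-bound step: one must control the floor-function losses that an adversarial spreading of pebbles could try to create, and the averaging bound is what certifies that these losses cost at most one unit of coverage. Combining the lower and upper bounds yields $f_{svcp}(K_n)=2n-3$.
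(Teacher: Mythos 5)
Your proposal is correct, but there is nothing in the paper to compare it against: the paper lists this theorem among "the following three results are straightforward" and gives no proof at all, so your argument actually supplies the missing details. Your three steps are all sound. The classification of secure vertex covers of $K_n$ as exactly the subsets of size $n-1$ or $n$ is right, with one caveat worth flagging: the paper's Definition 2 literally says "for each $u\in V$," under which an $(n-1)$-subset would fail the swap condition at vertices $u\in S$; your reading (defense required only at $u\in V\setminus S$) is the standard one and is clearly what the paper intends, since both its $P_6$ example and the value $2n-3$ itself are consistent only with that reading. Your lower bound (stack $2n-4$ pebbles on one vertex; direct deliveries cost $2$ each and chaining only wastes pebbles, so at most $n-2$ vertices get covered) and your upper bound (never abandon an occupied vertex, then the surplus $S=\sum_i\lfloor (c_i-1)/2\rfloor \geq n-k-\tfrac32$ forces $S\geq z-1$ by integrality) are both complete. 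It is worth noting that your upper-bound argument is a counting/averaging argument over an arbitrary distribution, whereas the technique the paper uses for the closely related Theorem 9 (complete $r$-partite graphs, which excludes $K_n$ since it requires $p_1\geq 2$) is a smoothing argument that repeatedly modifies an adversarial distribution until it is stacked on a single vertex; your integrality trick avoids that reduction entirely and is arguably cleaner for the complete graph, though the smoothing method generalizes more readily when the parts have unequal sizes.
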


\noindent The next two theorems require only short proofs. 
\begin{thm}
The secure vertex cover pebbling number of join of two graphs $G(V,E)$ and $G^{'}(V^{'}, E^{'})$ is $f_{svcp}(G+G^{'})\leq \gamma (G+G^{'})-4$, and the equality holds when $G$ and $G^{'}$ are complete graphs.
\end{thm}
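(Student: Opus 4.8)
The plan is to reduce the problem to a single worst-case distribution via the classical cover pebbling stacking theorem, and then to exploit the fact that a secure vertex cover of a join need not contain every vertex. Recall the classical fact that $\gamma(H)=\max_{v}\sum_{u}2^{d(u,v)}$, the maximum being attained by stacking all pebbles on one vertex $v$ (here $d$ is distance in $H=G+G^{'}$ and $d(v,v)=0$). The first step is to establish the analogue for $f_{svcp}$: among all distributions of a fixed size, one concentrated on a single vertex $v$ is hardest from which to reach a secure vertex cover, so that $f_{svcp}(H)=\max_{v}\min_{S}\sum_{u\in S}2^{d(u,v)}$, where $S$ ranges over the secure vertex covers of $H$. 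This reduction is the step I expect to be the main obstacle, because, unlike cover pebbling where the target $V(H)$ is fixed, here the target $S$ may be chosen adaptively according to where the pebbles lie; one must show that this adaptivity does not overturn the ``stacking is worst'' principle, presumably by a weight-domination argument of the kind that proves the cover pebbling stacking theorem, applied to the family of secure vertex covers.

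Granting the reduction, the second step is structural. From a stack on $v$ the cost to reach $S$ is exactly $\sum_{u\in S}2^{d(u,v)}$, so the inner minimum is achieved by omitting the heaviest admissible complement $V(H)\setminus S$. Since $\operatorname{diam}(H)\le 2$, every omitted vertex is worth $2$ (a neighbour of $v$) or $4$ (a non-neighbour), and $V(H)\setminus S$ must be an independent set lying entirely within $G$ or entirely within $G^{'}$, because a vertex of $G$ and a vertex of $G^{'}$ are adjacent in the join and cannot both be dropped from a vertex cover. The key algebraic observation is that $\min_{S}\sum_{u\in S}2^{d(u,v)}=\sum_{u}2^{d(u,v)}-(\text{maximum droppable weight at }v)$, and since subtracting a constant commutes with $\max_{v}$, it follows that if every vertex $v$ admits a secure-cover complement of weight at least $c$, then $f_{svcp}(H)\le\gamma(H)-c$. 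I would therefore exhibit such a set explicitly (for instance a single non-neighbour of $v$ at distance $2$, or an independent pair on the side not containing $v$) and verify the secure defence condition for the resulting cover.

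For the equality claim I would specialise to $G$ and $G^{'}$ complete, so that $H=K_{N}$ with $N=\lvert V\rvert+\lvert V^{'}\rvert$. Then every independent set is a single vertex, so from any stacked vertex $v$ the heaviest droppable complement is a single neighbour, of weight $2$, and this is forced because every vertex cover of $K_{N}$ has size at least $N-1$. Consequently $f_{svcp}(K_{N})=\gamma(K_{N})-2=(2N-1)-2=2N-3$, which is precisely the value supplied by Theorem 3, with the matching lower bound given by the stack that falls one pebble short of reaching any secure vertex cover. This is where I must flag the discrepancy in the statement as written: the complete-graph computation forces the saved weight to be $2$, not $4$, so the sharp constant certified by the complete case is $c=2$, and reconciling this with the asserted bound $\gamma(G+G^{'})-4$ is the crucial point the argument must settle. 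Once the reduction of the first step is in place and the correct constant is fixed, the weight accounting of the second step yields both the inequality and its sharpness.
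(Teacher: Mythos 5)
Your proposal isolates, correctly, the two places where any honest proof of this theorem must do work; but the first of these is a genuine gap, not a deferred technicality. The stacking identity you posit, $f_{svcp}(H)=\max_v\min_S\sum_{u\in S}2^{d(u,v)}$, is not a routine extension of the cover pebbling (Sjöstrand/Vuong--Wyckoff) theorem, and for adaptive target families of exactly this kind it can fail outright. A concrete warning within this very paper's definitions: for the covering cover pebbling number $\sigma$ (targets ranging over all vertex covers), every stack of $2$ pebbles on the star $K_{1,n}$ reaches the cover consisting of the center alone, yet the spread distribution of one pebble on each of $n-1$ leaves admits no pebbling move and pebbles no cover, so $\sigma(K_{1,n})\geq n$, far above the stacking value $2$. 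This is precisely why the paper's Theorem 9 proves its multipartite result through a bespoke smoothing argument rather than by citing a stacking principle. Your second step also has a per-vertex accounting slip: in a non-complete join, a vertex $v$ adjacent to all other vertices (e.g., the apex of $P_3+K_1$) has no distance-two vertex to drop, and dropping an independent pair from the other side need not yield a \emph{secure} cover (for $G^{'}=P_3=a\hbox{-}b\hbox{-}c$, the set $V\setminus\{a,c\}$ fails the defence condition at $a$, since every neighbour of $a$ in the set is adjacent to $c$). The bound can only be rescued for such $v$ by noting that its stack total $1+2(n-1)$ is itself at least $2$ below $\gamma$, which together with the distance-one saving of $2$ recovers the needed $4$; the ``uniform constant $c$ per vertex'' formulation as you state it would fail. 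For comparison, the paper's entire proof is two sentences --- any $|V|+|V^{'}|-1$ vertices form a secure vertex cover, and a vertex costs at most four pebbles since the diameter is at most two --- so it commits the same per-vertex cost accounting you do, but silently, without ever acknowledging that such accounting is only valid against stacked distributions.

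The discrepancy you flagged at the end is not a defect of your argument but a genuine error in the paper, and you are right not to paper over it. By the paper's own Theorem 3 (equivalently Theorem 5), $f_{svcp}(K_m+K_n)=f_{svcp}(K_{m+n})=2(m+n)-3$, whereas $\gamma(K_{m+n})-4=\bigl(2(m+n)-1\bigr)-4=2(m+n)-5$; so when $G$ and $G^{'}$ are complete, not only does equality fail, the asserted inequality itself fails, the true saving in a diameter-one join being $2$ exactly as your computation shows. The paper's proof misses this because ``four pebbles per vertex'' tacitly assumes the omitted vertex lies at distance two from the pebbles, which is impossible when $G+G^{'}$ is complete. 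So the best your outline (or any argument) can establish, once a stacking-type reduction for joins is actually proved, is a corrected statement: $f_{svcp}(G+G^{'})\leq\gamma(G+G^{'})-4$ for non-complete joins, with the complete case satisfying $f_{svcp}=\gamma-2$ instead. The statement as written cannot be proved because it is false.
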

\begin{proof}
Clearly, any $|V|+|V^{'}|-1$ vertices form a secure vertex cover for the join of two graphs $G$ and $G^{'}$. Also, the maximum number of pebbles needed to place a pebble on a vertex of $G+G^{'}$ is four and hence the result follows.
\end{proof}

\begin{thm}
For complete graphs $K_m$ and $K_n$, $m,n\geq 1$, $f_{svcp}(K_m+K_n)= f_{svcp}(K_m)+f_{svcp}(K_n)+3.$
\end{thm}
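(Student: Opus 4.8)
The plan is to reduce the statement entirely to Theorem 3 by exploiting the structural fact that the join of two complete graphs is again complete. First I would observe that $K_m + K_n \cong K_{m+n}$: within the copy of $K_m$ every pair of vertices is already adjacent, the same holds within $K_n$, and the join operation inserts every edge joining a vertex of $K_m$ to a vertex of $K_n$. Thus every two of the $m+n$ vertices are adjacent, so $K_m + K_n$ is the complete graph on $m+n$ vertices.

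With this identification in hand, the left-hand side is handled by a single application of Theorem 3:
\begin{equation*}
f_{svcp}(K_m + K_n) = f_{svcp}(K_{m+n}) = 2(m+n) - 3 = 2m + 2n - 3.
\end{equation*}
The right-hand side is then evaluated by applying Theorem 3 to each factor separately:
\begin{equation*}
f_{svcp}(K_m) + f_{svcp}(K_n) + 3 = (2m - 3) + (2n - 3) + 3 = 2m + 2n - 3.
\end{equation*}
Since both expressions equal $2m + 2n - 3$, the claimed identity follows.

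The only real point requiring care is the boundary behaviour at $m = 1$ or $n = 1$, since the formula $2k - 3$ from Theorem 3 is genuinely meaningful only for $k \geq 2$; when $k = 1$ the graph $K_1$ has no edges, so its secure vertex cover pebbling number is a formal value rather than one arising from an actual secure cover. I expect this to be the main---and essentially only---obstacle. I would dispose of it by checking the small cases directly: for instance $K_1 + K_1 = K_2$, where $f_{svcp}(K_2) = 1$ by Theorem 1, matches $f_{svcp}(K_1) + f_{svcp}(K_1) + 3$ once the formal value $2 \cdot 1 - 3 = -1$ is substituted, and similarly $K_1 + K_n = K_{n+1}$ is checked against Theorem 3. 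After confirming that these degenerate instances remain consistent with the arithmetic above, the proof is complete.
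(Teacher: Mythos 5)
Your proposal is correct and follows essentially the same route as the paper: identify $K_m + K_n \cong K_{m+n}$, apply Theorem 3 to both sides, and compare the arithmetic. Your additional scrutiny of the degenerate case $m=1$ or $n=1$ (where $2k-3$ is only a formal value) is a point of care the paper itself glosses over, but it does not change the substance of the argument.
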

\begin{proof}
By the definition of the join of two graphs, we know that $K_m+K_n$ is also a complete graph with $m+n$ vertices. Therefore, by theorem 3 we have,
\begin{align*}
 f_{svcp}(K_m+K_n)&=2(m+n) - 3\\
&=(2m-3)+(2n-3)+3\\
&=f_{svcp}(K_m)+f_{svcp}(K_n)+3
\end{align*}
\end{proof}

\noindent In Section 2.1, we prove that the secure vertex cover problem and the secure vertex cover pebbling problem are both NP- complete. In section 2.2, we prove exact results for the secure vertex cover pebbling number of complete $r$- partite graphs, paths, wheels, and friendship graphs. 

\subsection{Complexity}
\subsubsection{Complexity of Secure Vertex Cover Problem}
In this section, we prove that the secure vertex cover problem is NP-complete. The proof's reduction is from the vertex cover problem which is a known NP-complete problem\cite{GaJo90}. The vertex cover problem asks `for a given graph $G$ and an integer $k$, does the graph $G$ contain a vertex cover of cardinality at most $k$?'. 
\begin{thm}
The secure vertex cover problem is NP- complete.
\end{thm}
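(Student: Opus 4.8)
The plan is to establish both membership in NP and NP-hardness, with the reduction running from the Vertex Cover problem. For membership, given a graph $G$ and a candidate set $S \subseteq V(G)$, a polynomial-time verifier first checks that every edge has an endpoint in $S$, so that $S$ is a vertex cover, and then checks the defense condition: for each $u \in V \setminus S$ it ranges over the neighbors $v \in S \cap N(u)$ and tests whether $(S \setminus \{v\}) \cup \{u\}$ is again a vertex cover. Since there are at most $|V|$ choices of $u$, at most $\deg(u)$ choices of $v$, and each vertex-cover test is linear in $|E|$, the whole verification is polynomial, so the secure vertex cover problem lies in NP.

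For hardness, given an instance $(G,k)$ of Vertex Cover I would build an instance $(G', k')$ of Secure Vertex Cover in polynomial time by attaching a fixed gadget $H$ to each vertex $v \in V(G)$. The gadget is chosen to have three properties: (i) no gadget vertex is adjacent to any original edge, so every edge of $G$ must still be covered by vertices of $V(G)$; (ii) $H$ contains a vertex that is forced into every secure vertex cover of $G'$ and is adjacent to $v$, so that the attachment point $v$ is automatically dominated and can be swapped out safely; and (iii) the covering and the defense of all gadget vertices are resolved internally by a fixed number $t$ of gadget vertices, independent of whether $v$ itself is chosen. Granting such a gadget, I set $k' = k + t\,|V(G)|$.

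The correctness argument then splits into two directions. In the forward direction, starting from a vertex cover $C$ of $G$ with $|C| \le k$, I extend $C$ to $S = C \cup (\text{the forced gadget vertices})$ and verify that $S$ is a secure vertex cover of $G'$: property (i) guarantees that $S$ covers $G'$, while properties (ii) and (iii) guarantee that every external vertex can be defended by a swap contained in its own gadget, giving $|S| \le k + t\,|V(G)| = k'$. In the reverse direction, from a secure vertex cover $S'$ of $G'$ with $|S'| \le k'$, property (i) forces $S' \cap V(G)$ to be a vertex cover of $G$, and a counting argument using the forced, fixed-cost structure of each gadget shows that $S'$ spends at least $t$ vertices per gadget, leaving at most $k$ for $V(G)$; hence $G$ has a vertex cover of size at most $k$.

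The main obstacle is securing properties (ii) and (iii): designing the gadget so that the \emph{defense} requirement, and not merely the covering requirement, is discharged at a fixed, instance-independent cost. The subtlety is that a careless attachment, for instance a single pendant, lets one trade an original vertex against a gadget vertex when covering, which collapses the secure vertex cover number to a constant and destroys the reduction; the gadget must therefore force its own vertices into $S$ and supply the swap-defender for its attachment point without ever helping to cover $G$. Verifying that the swap sets $(S \setminus \{w\}) \cup \{u\}$ remain vertex covers of $G'$ for every external $u$, and that this holds exactly when $S \cap V(G)$ is a vertex cover of $G$, is where the care is needed.
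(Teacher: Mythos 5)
Your NP-membership paragraph is fine, and your top-level plan (a reduction from Vertex Cover) matches the paper's. But the hardness half of your writeup is not a proof: it is a specification. Everything hinges on a gadget $H$ satisfying your properties (i)--(iii), and you never exhibit such an $H$, never fix the constant $t$, and never verify that any graph has all three properties simultaneously; you even flag this yourself (``granting such a gadget,'' ``the main obstacle is securing properties (ii) and (iii)''). Without a concrete $H$, nothing downstream can be checked: the forward direction requires verifying the actual swap sets $(S \setminus \{w\}) \cup \{u\}$ edge by edge, and the reverse direction's counting claim (each gadget forces at least $t$ vertices into any secure vertex cover) is exactly the kind of assertion that fails for careless gadgets, as your own pendant example illustrates. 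Deferring the construction defers the entire content of the theorem.

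For comparison, the paper's construction is not a per-vertex local gadget at all, and this is worth noting because locality is precisely what makes your properties hard to achieve. The paper duplicates the vertex set, $V' = \{v' : v \in V\}$, joins each $v$ to its copy $v'$, and makes $V'$ induce a clique; the candidate secure vertex cover is $S \cup V'$, where $S$ is a vertex cover of $G$. The clique does triple duty: no vertex of $V'$ touches an edge of $G$, so original edges must be covered inside $V$ (your (i)); each $v \in V \setminus S$ has defender $v'$, and $\bigl((S \cup V') \setminus \{v'\}\bigr) \cup \{v\}$ is still a vertex cover because $vv'$ is now covered by $v$ while every clique edge at $v'$ is covered by the other copies, all of which remain in the set (your (ii)); and the clique forces all but at most one vertex of $V'$ into any vertex cover, giving the fixed per-instance cost (your (iii)). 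A small gadget attached separately at each vertex struggles with exactly the step where the defender is deleted --- something must still cover all the gadget's edges afterwards, and in a constant-size local gadget the deleted defender typically leaves a gadget edge exposed; the clique avoids this because the surviving copies cover everything. So the missing object in your proposal is not a routine detail to be filled in later; it is the proof.
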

\begin{proof}
Let $G(V,E)$ be a given graph. Then, construct a graph $G^{*}(V^{*}, E^{*})$ as follows: Let the vertex set of $V^{*}$ be given by, $V^{*}(G^{*}) = V \cup V^{'}$ where $V^{'}= \{v^{'}: v\in V\}$ and the vertex set $V^{'}$ induces a clique. Let the edge set of $E^{*}$ be given by, $E^{*}(G^{*})= E \cup E^{'}\cup E^{"}$ where $E^{'}$ contains the edges of the complete graph induced by the vertices of $V^{'}$ and $E^{"}$ contains the edges which join the vertices of $V$ to the corresponding vertices of the vertex set $V^{'}$. \\
Thus, the obtained graph $G^{*}(V^{*}, E^{*})$ has two layers, the top layer with graph $G$ and the bottom layer with $|V|$ vertices which forms a clique(see Figure 2). Also, note that the construction of $G^{*}$ from $G$ can be done in polynomial time. \\
Let $S$ be the vertex cover for the graph $G$ with $|S|\leq k$. To complete the proof, we must show that $S$ is a vertex cover for the graph $G$ if and only if $S\cup V^{'}$ is a secure vertex cover for the graph $G^{*}$.\\
Let us assume that $S$ with $|S|\leq k$ is a vertex cover for graph $G$. To prove $D=S\cup V^{'}$ forms a secure vertex cover for $G^{*}$. It is obvious that $|V^{'}|$ vertices form a secure vertex cover for $V^{'}$. Also, for every vertex $v\in V\backslash S$, there exists a vertex $v^{'}\in V^{'}\in D$, and $v\in N(v^{'})$ such that $D\backslash \{v^{'}\} \cup \{v\}$ forms a vertex cover for $G^{*}$. Therefore, $D$ is a secure vertex cover for $G^{*}$ whenever $S$ is a vertex cover for $G$.\\
Conversely, suppose that $D=S\cup V^{'}$ forms a secure vertex cover for $G^{*}$. From the definition, $S\cup V^{'}$ is also a vertex cover for $G^{*}$. So, even if we remove $V^{'}$ from $D$, $S$ forms a vertex cover for $G$. Therefore, $S$ is a vertex cover of $G$ whenever $S\cup V^{'}$ is a secure vertex cover of $G^{*}$.
\end{proof}
\begin{figure}[h!]
\centering
\includegraphics[width=7cm,height=5cm]{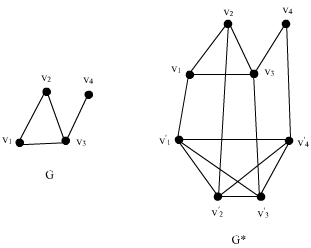}
\caption{Construction of $G^{*}$ from $G$}
\label{Sel}
\end{figure}
\subsubsection{Complexity of the Secure Vertex Cover Pebbling Problem}
In this section, we prove that the secure vertex cover pebbling problem is NP- complete. The proof's reduction is from the secure vertex cover problem which is a NP-complete problem(Theorem 7). The secure vertex cover problem asks `for a given graph $G$ and an integer $k$, does the graph $G$ contain a secure vertex cover of cardinality at most $k$?'. 

\begin{thm}
The secure vertex cover pebbling problem is NP- complete.
\end{thm}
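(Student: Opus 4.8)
The plan is to work with the natural decision version of the problem: given a graph $G$ together with a pebble distribution $D$ on $V(G)$, decide whether $D$ can reach some secure vertex cover of $G$. I would first argue membership in NP. A yes-certificate consists of a candidate set $S\subseteq V(G)$ together with a witness that $D$ reaches $S$. Checking that $S$ is a secure vertex cover is polynomial: verify that $S$ covers every edge, and then for each $u\in V(G)$ scan $S\cap N(u)$ for a defender $v$ with $(S\setminus\{v\})\cup\{u\}$ again a vertex cover. For the reachability part I would use the fact that reaching a fixed target set by a sequence of pebbling moves admits a polynomially verifiable certificate (a collection of pebbling moves, equivalently a nonnegative integer tree-strategy or flow whose support and multiplicities can be described succinctly). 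Thus the whole certificate is polynomially verifiable and the problem lies in NP.

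For NP-hardness I would reduce from the secure vertex cover problem, shown NP-complete in Theorem 7: given $(G,k)$, decide whether $G$ has a secure vertex cover of cardinality at most $k$. The guiding idea is that if all pebbles sit on a single source $r$, then the cost of reaching a set $S$ is exactly $\sum_{v\in S}2^{d(r,v)}$, so if every vertex we wish to cover lies at the same distance from $r$, the cost of covering $S$ becomes proportional to $|S|$ and the pebble budget becomes a size threshold. I would therefore build $H$ from $G$ by attaching a small gadget that equalizes the distances from a designated source $r$ to all relevant vertices, place $N$ pebbles on $r$, and choose $N$ (together with the gadget's fixed contribution) so that a secure vertex cover of $H$ is reachable if and only if $G$ admits a secure vertex cover of size at most $k$. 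The construction is polynomial in the size of $G$.

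Correctness then splits into two directions. If $G$ has a secure vertex cover $C$ with $|C|\le k$, I would show that $C$ lifts to a secure vertex cover of $H$ (adjoining the forced gadget vertices) whose reaching cost from $r$ stays within the budget $N$, so $D$ reaches it. Conversely, if every secure vertex cover of $G$ has size exceeding $k$, I would show that every secure vertex cover of $H$ projects to a secure vertex cover of $G$ of size greater than $k$, hence costs more than $N$ to reach, so $D$ reaches none of them.

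The main obstacle, and the step needing the most care, is the gadget itself: I must ensure that attaching distance-equalizing vertices and edges neither creates spurious small secure vertex covers in $H$ nor destroys the defense property, so that the secure vertex covers of $H$ are in a clean, size-preserving correspondence with those of $G$. In particular the defense condition for the newly added vertices and for the source $r$ has to be checked explicitly, since adjoining high-degree vertices is exactly what tends to introduce unintended secure covers or break existing ones; pinning down this correspondence is where the real work lies. A secondary point to settle is the in-NP reachability certificate, confirming that its size remains polynomial even when $D$ is encoded with pebble counts in binary.
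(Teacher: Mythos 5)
Your proposal correctly identifies the reduction source (the secure vertex cover problem of Theorem 7) and the right guiding intuition: stack all pebbles on a single source, equalize distances so that the cost of reaching a target set becomes an affine function of its cardinality, and turn the pebble budget into a size threshold. This is in fact the skeleton of the paper's own argument. But the writeup stops exactly where the proof has to begin: the gadget is never exhibited. You say you ``would build $H$ from $G$ by attaching a small gadget that equalizes the distances,'' and you yourself concede that ensuring this gadget neither creates spurious small secure vertex covers nor destroys the defense property ``is where the real work lies.'' In a hardness reduction, that correspondence is not a detail to be filled in later; it \emph{is} the theorem. Without a concrete $H$, a concrete budget $N$, and a verified two-way, size-preserving correspondence between the secure vertex covers of $H$ and those of $G$, what you have is a statement of what a proof must accomplish, not a proof. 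There is also a smaller unresolved point that you flag but do not settle: NP membership when pebble counts are encoded in binary requires a compressed (move-count/flow style) reachability certificate together with an argument that such a certificate can be validly ordered; this is a genuine result, not something that can be waved at, and the paper does not address membership at all.

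For comparison, the paper realizes your plan with the simplest possible gadget: a single universal apex vertex $u$ joined to every vertex of $G$, giving $G^{*}$. This one vertex does both jobs at once. First, it makes the diameter of $G^{*}$ at most $2$, so from a stacked distribution each uncovered target costs at most $4$ pebbles --- your ``distance equalization.'' Second, $u$ is forced into any secure vertex cover of $G^{*}$ (otherwise all of $V(G)$ would be needed to cover the edges incident to $u$), so the cover structure of $G^{*}$ is pinned to that of $G$; the resulting threshold is $l+4(k-1)+2$ with $l\in\{1,2,3\}$ absorbing the position of the stack relative to the cover. Note also that the paper's decision problem is phrased as a threshold on the value $f_{svcp}(G^{*})$, whereas yours is reachability of a given distribution; both are legitimate formalizations of the informal statement, but whichever you choose, the missing gadget and the missing correspondence argument are what separate your outline from a complete proof.
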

\begin{proof}
Let $G(V,E)$ be a given graph. Then, construct a graph $G^{*}(V^{*}, E^{*})$ as follows: Let the vertex set of $V^{*}$ be given by, $V^{*}(G^{*})= V \cup \{u\}$ and the edge set of $E^{*}$ be given by $E^{*}(G^{*})= E\cup E^{'}$ where $E^{'}$ contains the edges which joins the vertices of $V$ to the vertex $u$.\\
So, the obtained graph $G^{*}(V^{*}, E^{*})$ has two layers, the top layer with graph $G$ and the bottom layer with vertex $u$ (see Figure 3). Since, we have added only one extra vertex, the construction of $G^{*}$ from $G$ can be done in polynomial time.\\
Let $S$ be the secure vertex cover for the graph $G$ with $|S|\leq k$. To complete the proof, we will show that $S$ is a vertex cover for the graph $G$ if and only if the secure vertex cover pebbling number is $l+4(k-1)+2$, for some $1\leq l\leq 3$.\\
Let us assume that $S$ is a secure vertex cover for the graph $G$ with $|S|\leq k$. To prove that the minimum number of pebbles required to produce a secure vertex cover solution is $l+4(k-1)+2,$ for some $1\leq l\leq 3$, note that to produce a secure vertex cover solution, we need to place a pebble on all the vertices of set $S$ and the vertex $u$.\\
Consider the distribution of placing all the pebbles on any vertex $v$ of $G$. Then, we require a minimum of $1+4(k-1)+2$ or $2+4(k-1)+2$ or $3+4(k-2)+2$ pebbles in order to produce a secure vertex cover solution if the vertex $v$ belongs to the set $S$, if the vertex $v$ does not belongs to the set $S$ but an adjacent vertex of $v$ belongs to the set $S$, if the vertex $v$ and the adjacent vertex of $v$ belongs to the set $S$, respectively. Consequently, a minimum of $l+4(k-1)+2, 1\leq l\leq 3$ pebbles are required  to produce a secure vertex cover solution. \\
Now, we will prove the upper bound by considering all the possibilities. First, consider the distribution of all the pebbles to the vertices of $G$. In order to produce a secure vertex cover solution, we are forced to place a pebble on the vertex $u$. If not, we are forced to place a pebble on all the vertices of $G$ which is a contradiction and for placing a pebble on the vertex $u$ by a pebbling move we require a maximum of two pebbles since all the vertices of $G$ are adjacent to the vertex $u$. Note that the maximum distance from any vertex of a graph $G$ to any other vertex of graph $G$ is two and hence we require a maximum of $4(k-1)$ pebbles in order to place a pebble on $k-1$ target vertices on the set $S$ by a sequence of pebbling moves. Now, if the left-out target vertex has pebbles on it, or if one of the adjacent vertex of the target vertex has two pebbles on it, then there is nothing to prove. Now, consider the case where one of the vertex say, $v$ in $G$ has three pebbles on it and the vertex $v$ as well as one of the adjacent vertex of $v$ belongs to the target set $S$. Then, a secure vertex cover solution can be obtained as follows: retain one pebble on the vertex $v$ and place a pebble on the adjacent vertex of $v$ by a pebbling move. Consequently, we have placed pebbles on two target vertices of the graph $G$ and in order to place a pebble on the $k-2$ target vertices we require only a maximum of $4(k-2)$ pebbles and we are done. \\
Now, consider the distribution of all the pebbles to the vertex $u$. Then, a maximum of $2k$ pebbles are required to place a pebble on the $k$ vertices of set $S$ by a sequence of pebbling moves and by retaining one pebble on the vertex $u$, we are done. \\
Now, consider the distribution of pebbles to the vertices of $G^{*}$. Then, it is straight forward to see that we require less number of pebbles compared to the former case.
Thus, $l+4(k-1)+2, 1\leq l\leq 3$ pebbles are only required to produce a secure vertex cover solution whenever $S$ is a secure vertex cover for $G$.\\
Conversely, assume that $l+4(k-1)+2, 1\leq l\leq 3$ pebbles are required to produce a secure vertex cover solution. Then, to prove $S$ is a secure vertex cover for $G$. Now, we are forced to choose the vertex $u$ in the secure vertex cover set for $G^{*}$. If not, then we are forced to choose all the vertices on the graph $G$ in the secure vertex cover set since all the vertices of graph $G$ are incident with vertex $u$ which is a contradiction. Thus, vertex $u$ belongs to the secure vertex cover set of $G^{*}$ and we require a maximum of two pebbles in order to place a pebble on the vertex $u$ by a pebbling move. Now, from the definition, it is straight forward to see that after a sequence of pebbling moves, the remaining pebbles are placed on the vertices of the secure vertex cover of $G$. Thus, $S$ forms a vertex cover for $G$. Therefore, $S$ is a secure vertex cover for $G$ whenever $l+4(k-1)+2, 1\leq l\leq 3$ pebbles are required to produce a secure vertex cover solution.
\end{proof}
\begin{figure}[h!]
\centering
\includegraphics[width=10 cm,height=4cm]{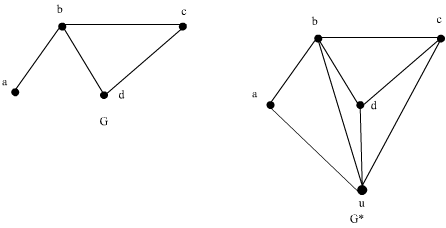}
\caption{Construction of $G^{*}$ from $G$}
\label{Sel}
\end{figure}
\subsection{Exact Results}
\subsubsection{Secure vertex cover pebbling number for complete multi-partite graphs}
We prove the following lemma in order to find the secure vertex cover pebbling number for complete $r$- partite graph $K_{p_1,p_2,p_3,...,p_r}$.
\begin{lem}
For $p_1\geq p_2\geq ...\geq p_r$, let $G= K_{p_1,p_2,p_3,...,p_r}$ be the complete $r$- partite graph with $p_1,p_2,p_3,...,p_r$ vertices in the vertex classes $s_1,s_2,s_3,...,s_r$ respectively. Then, the secure vertex cover number is given by, $\alpha_{s}(G)=p_1+p_2+...+p_r-1$.
\end{lem}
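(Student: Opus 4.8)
The plan is to prove the two inequalities $\alpha_s(G)\le n-1$ and $\alpha_s(G)\ge n-1$ separately, where $n=p_1+p_2+\cdots+p_r$. The single tool I would lean on throughout is the structural observation that a set $S\subseteq V$ is a vertex cover of $G$ precisely when its complement $V\setminus S$ is an independent set, together with the fact that the independent sets of a complete $r$-partite graph are exactly the subsets of one vertex class $s_i$ (since every edge joins two distinct classes). Reformulating the security condition through complements is what makes everything transparent: $(S\setminus\{v\})\cup\{u\}$ is a vertex cover iff its complement $(V\setminus S\setminus\{u\})\cup\{v\}$ is independent, i.e.\ lies within a single class.

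For the upper bound I would exhibit an explicit secure vertex cover of size $n-1$. Fix any vertex $x$, say $x\in s_j$, and set $S=V\setminus\{x\}$. Since $\{x\}$ is independent, $S$ is a vertex cover. The only vertex outside $S$ is $x$ itself, and (as $r\ge 2$) $x$ has a neighbour $v$, namely any vertex in a class other than $s_j$, which necessarily lies in $S$. The swap $(S\setminus\{v\})\cup\{u\}$ with $u=x$ has complement $\{v\}$, a single vertex and hence independent, so the swap is again a vertex cover. Thus $S$ is secure and $\alpha_s(G)\le n-1$.

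For the lower bound I would argue that every secure vertex cover $S$ satisfies $|V\setminus S|\le 1$. Indeed, $V\setminus S$ is independent, so it sits inside one class, say $V\setminus S\subseteq s_j$. If $|V\setminus S|\ge 2$, choose $u\in V\setminus S$; security demands a neighbour $v\in S\cap N(u)$ with $(V\setminus S\setminus\{u\})\cup\{v\}$ independent. But $V\setminus S\setminus\{u\}$ is nonempty and lies in $s_j$, whereas $v$, being adjacent to $u\in s_j$, lies in a class different from $s_j$; these two vertices occupy distinct classes and are therefore joined by an edge, so the complement is not independent, a contradiction. Hence $|S|\ge n-1$, and combining the two bounds gives $\alpha_s(G)=n-1$.

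I expect the only step requiring genuine care to be the lower bound, and precisely the verification that after any admissible swap the complement really does contain two vertices from two different classes. This is exactly where both hypotheses $|V\setminus S|\ge 2$ and $v\in N(u)$ are consumed, and it is the crux of why one cannot omit two vertices from a secure cover. The upper-bound construction and the independent-set characterization of vertex covers should both be routine.
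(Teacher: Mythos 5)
Your proof is correct, and its skeleton matches the paper's: exhibit an $(n-1)$-vertex secure cover for the upper bound, and derive a contradiction from the failure of the defender swap for the lower bound. The difference is the organizing tool. The paper argues directly on edges: it declares the upper bound obvious, and for the lower bound it splits into two cases according to whether the two omitted vertices lie in the same class or in different classes, exhibiting a single failed swap $(S'\setminus\{t_{21}\})\cup\{t_{11}\}$ in the first case. Your complement formulation (a set is a vertex cover iff its complement is independent, and the independent sets here are exactly the subsets of a single class) buys three things the paper leaves informal: the ``different classes'' case is absorbed into the observation that $V\setminus S$ must lie in one class; the ``same class'' case is ruled out for \emph{every} admissible defender $v\in S\cap N(u)$, not just one chosen vertex, which is what refuting the security condition actually requires; and the argument applies at once to any $S$ with $|V\setminus S|\ge 2$, rather than only to covers of size exactly $n-2$. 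You also make explicit the standing hypothesis $r\ge 2$ (for $r=1$ the graph has no edges, every vertex outside $S$ has empty neighborhood, and the claim fails), which the paper tacitly assumes. One further point in your favor: like the paper's own usage elsewhere (e.g., its verification that $S_n$ is secure for paths), you read the security condition as quantified over $u\notin S$ only; under the definition's literal ``for each $u\in V$'' phrasing the lemma itself would be false, so your reading is the correct one.
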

\begin{proof}
Let $n=p_1+p_2+...+p_r$. Then, it is obvious that any $n-1$ vertices of graph $G$ forms a secure vertex cover which gives the upper bound.\\
To prove the lower bound, let us assume that some $n -2$ vertices form the secure vertex cover $S^{'}$ for $G$. Let us assume that the two left-out vertices are from the same partition $s_1$, say, $t_{11}$ and $t_{12}$. Then, $(S^{'}\backslash \{t_{21}\})\cup\{t_{11}\}$ is not a vertex cover which is a contradiction. Now, let us assume that the two left out vertices are from different partitions $s_1$ and $s_2$ say, $t_{11}$ and $t_{21}$. Then, the set $S^{'}$ is not a vertex cover and hence not a secure vertex cover which is again a contradiction. \\
Therefore, $\alpha_{s}(G)=n-1$.
\end{proof}
\begin{Def}
The support, denoted $supp(C)$, of a distribution $C$ is the set of vertices containing pebbles.
\end{Def}
\begin{thm}
For $p_1\geq p_2\geq ...\geq p_r$, $p_1\geq 2$, the secure vertex cover pebbling number of complete $r$- partite graph $G= K_{p_1,p_2,p_3,...,p_r}$ is, $f_{svcp}(G)=4p_1+2p_2+...+2p_r-7$.
\begin{proof}
Consider the distribution of placing all the pebbles on any one of the vertices of the graph $G$. Then, to place a pebble on any $n-1$ vertices of the graph $G$, we require a minimum of $4p_1+2p_2+...+2p_r-7$ pebbles. Hence, $f_{svcp}(G)\geq 4p_1+2p_2+...+2p_r-7$. \\
For the upper bound, let $C$ be a distribution of pebbles on $G$ and denote $C_i = C_{V_i}$ (the distribution $C$ restricted to the part $V_i$).\\
Suppose there is a part $V_i$ with $C(V_i)>1$ and $|supp(C_i)| > 1$.  Label the vertices of $V_i$ as $x_1, ..., x_{p_i}$ so that $C(v_1) \geq C(x_j)$ for all $j$, and choose some $k$ such that $C(x_k) > 0$.  If $C(x_k) = 1$ then define the distribution $C^{'}$ by $C^{'}(x_1) = C^{'}(x_1) + 1, C^{'}(x_k) = 0$, and $C^{'}(x) = C(x)$ for all other $x$.  If $C(x_k) > 1$ then define the distribution $C^{'}$ by $C^{'}(x_1) = C^{'}(x_1) + 2, C^{'}(x_k) = C(x_k) - 2$, and $C^{'}(x) = C(x)$ for all other $x$.  In either case it is easy to see that  $|C^{'}| \geq |C|$ and  for any set $S$ of $n-1$ vertices, if $C^{'}$ reaches $S$ then so does $C$.  Thus, we may restrict our attention to the case in which $|supp(C_i)| \leq 1$ for all $i$.\\
Suppose there are $i < j$ with pebbles in each of $V_i$ and $V_j$.  Label those vertices $u_i$ in $V_i$ and $u_j$ in $V_j$.  If $C(u_j) = 1$, then define the distribution $C^{'}$ by $C^{'}(u_i) = C^{'}(u_i) + 1, C^{'}(u_j) = 0$, and $C^{'}(u) = C(u)$ for all other $u$.  If $C(u_j) > 1$, then define the distribution $C^{'}$ by $C^{'}(u_i) = C^{'}(u_i) + 2, C^{'}(u_j) = C(u_j) - 2$, and $C^{'}(u) = C(u)$ for all other $u$.  In either case it is easy to see that $|C^{'}| \geq |C|$ and for any set $S$ of $n-1$ vertices, if $C^{'}$ reaches $S$ then so does $C$.  Thus, we may restrict our attention to the case in which only one $V_i$ contains pebbles; that is, $C$ is stacked on only one vertex $y$ of $G$.\\
Suppose that $y$ is not in $V_1$, and let $z$ be any vertex of $V_1$.  Define $C^{'}$ by $C^{'}(z) = C(y)$ and $C^{'}(v) = 0$ for all other $v$.  Then, it is easy to see that $|C^{'}| = |C|$ and for any set $S$ of $n-1$ vertices, if $C^{'}$ reaches $S$ then so does $C$.  Thus, we may restrict our attention to the case in which $C$ is stacked on one vertex $z$ of $V_1$.\\
Now, if $|C| \geq 4p_1 + 2p_2 + ... + 2p_r - 7$, then it is clear to see that $C$ reaches $V-v$, where $v$ is any vertex of $V_1$ different from $z$, which exists because $p_1\geq 2$.
\end{proof}
\end{thm}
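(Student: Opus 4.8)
The plan is to lean on Lemma 1, which states that $n-1$ (where $n=p_1+\cdots+p_r$) is the minimum size of a secure vertex cover and that \emph{every} set of $n-1$ vertices is one. Since reaching a larger secure vertex cover forces reaching an $(n-1)$-subset contained in it and is never cheaper, ``reaches some secure vertex cover'' is equivalent to ``reaches some set of $n-1$ vertices,'' i.e.\ covers all but one vertex of $G$. I would first record the distance structure of $G$: for $r\geq 2$, two vertices lie at distance $1$ if they are in different parts and distance $2$ if they are in the same part. Hence from a single stack of pebbles at a vertex $z$, the cost to place a pebble on a vertex $v$ is $2^{d(z,v)}$, which equals $1$ if $v=z$, equals $2$ if $v$ lies outside $z$'s part, and equals $4$ if $v$ lies in $z$'s part but $v\neq z$.

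For the lower bound I would exhibit a bad distribution by stacking all pebbles on a single vertex $z$ of the largest part $V_1$. From such a stack the cheapest way to cover $n-1$ vertices is to retain one pebble on $z$, pay $4$ for each of the remaining $p_1-2$ covered vertices of $V_1$, pay $2$ for each of the $n-p_1$ vertices outside $V_1$, and omit one further vertex of $V_1$ (which exists because $p_1\geq 2$); this costs exactly $1+4(p_1-2)+2(n-p_1)=4p_1+2p_2+\cdots+2p_r-7$. A short case check---comparing the savings from omitting $z$ (weight $1$), an outside vertex (weight $2$), or a same-part vertex (weight $4$), and ranging over which part contains $z$---confirms this is the minimum cost, so a stack of one fewer pebble reaches no secure vertex cover and $f_{svcp}(G)\geq 4p_1+2p_2+\cdots+2p_r-7$.

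For the upper bound I would take an arbitrary distribution $C$ with $|C|\geq 4p_1+2p_2+\cdots+2p_r-7$ and reduce it to a single stack on $V_1$ by a sequence of consolidation steps, each preserving the pebble count and each having the property that whenever the consolidated distribution reaches a set $S$ of $n-1$ vertices, the original $C$ reaches $S$ as well. The steps are: (i) within each part, push all of its pebbles onto the most-pebbled vertex of that part; (ii) merge the pebbles of distinct parts onto a single part; and (iii) relocate the resulting stack to a vertex $z$ of $V_1$. After these reductions the single stack, still of size at least $4p_1+2p_2+\cdots+2p_r-7$, reaches $V\setminus\{v\}$ for any $v\in V_1\setminus\{z\}$ by the cost computation above, and the monotonicity of the reductions transfers this conclusion back to $C$.

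The main obstacle is justifying the monotonicity of the consolidation steps---that concentrating or relocating pebbles toward the largest part never lets the consolidated distribution reach an $(n-1)$-set that the original cannot. I would establish each reduction by showing that any pebbling sequence witnessing that the consolidated $C'$ reaches $S$ can be mimicked by a sequence from $C$ reaching $S$; the symmetry of the complete multipartite structure---all vertices of a part share the same neighbourhood, and all parts are interchangeable with respect to distances to a fixed target---is exactly what legitimises each exchange. Particular care is needed in the boundary cases where the vertex being emptied, or the single omitted vertex, coincides with a target of $S$.
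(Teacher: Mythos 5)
Your proposal follows essentially the same route as the paper's proof: the same lower-bound distribution (a stack on one vertex $z$ of $V_1$, whose cheapest $(n-1)$-set costs $1+4(p_1-2)+2(n-p_1)=4p_1+2p_2+\cdots+2p_r-7$), and the same three-step consolidation for the upper bound (merge pebbles within a part, then across parts, then relocate the stack to $V_1$), resting on the same monotonicity claims that the paper itself only asserts ("it is easy to see") rather than proves in detail. If anything, you are slightly more explicit than the paper in two places: invoking Lemma 1 to reduce "reaches some secure vertex cover" to "reaches some set of $n-1$ vertices," and noting that the extremal stack must lie in the largest part $V_1$, which the paper's phrasing ("any one of the vertices") glosses over.
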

\begin{cor}
For a star graph $K_{1,n}$, $f_{svcp}(K_{1,n})= \sigma(K_{1,n})$. 
\end{cor}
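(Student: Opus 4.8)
The plan is to pin down $f_{svcp}(K_{1,n})$ from the preceding theorem and to obtain one half of the desired equality for free from the general bound $\sigma(G)\leq f_{svcp}(G)\leq \gamma(G)$. Since the star $K_{1,n}$ is the complete bipartite graph $K_{n,1}$ with vertex classes of sizes $p_1=n\geq 2$ and $p_2=1$, the preceding theorem gives $f_{svcp}(K_{1,n})=4p_1+2p_2-7=4n-5$. Combined with $\sigma(K_{1,n})\leq f_{svcp}(K_{1,n})$, this already yields $\sigma(K_{1,n})\leq 4n-5$, so the whole corollary rests on the reverse inequality $\sigma(K_{1,n})\geq 4n-5$.

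To establish $\sigma(K_{1,n})\geq 4n-5$ I would exhibit a single distribution of $4n-6$ pebbles from which no vertex cover of $K_{1,n}$ can be reached. The guiding idea is the one behind the lower bound for $f_{svcp}$ itself: concentrate the pebbles so that the only covers within range are the expensive ones. Writing $c$ for the centre and $\ell_1,\dots,\ell_n$ for the leaves, the two minimal vertex covers are $\{c\}$ and the full leaf set $\{\ell_1,\dots,\ell_n\}$, so a candidate obstructing distribution is one built around a stack on a single leaf, tuned so that neither $\{c\}$ nor the leaf set can be assembled.

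The main obstacle -- and the crux of the whole corollary -- is precisely this lower bound, because the minimal cover $\{c\}$ is extremely cheap to reach: from two pebbles on any leaf one move already occupies $c$. Thus the delicate task is to verify that the chosen distribution of $4n-6$ pebbles really does fail to deliver a pebble to $c$ while simultaneously failing to fill the entire leaf set, and to confirm that no distribution of that size can do better. In effect one must show that the extremal distribution governing $\sigma(K_{1,n})$ coincides with the one governing $f_{svcp}(K_{1,n})$; once that coincidence is secured, the reverse inequality $\sigma(K_{1,n})\geq 4n-5$ follows and, with the upper bound already in hand, gives $f_{svcp}(K_{1,n})=\sigma(K_{1,n})$. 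This reconciliation between the availability of the cheap cover $\{c\}$ and the required count $4n-6$ is the step I expect to be hardest.
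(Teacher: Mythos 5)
Your first half is correct and is exactly the intended route: Theorem 9 with $p_1=n\geq 2$, $p_2=1$ gives $f_{svcp}(K_{1,n})=4n-5$, and Theorem 2 gives $\sigma(K_{1,n})\leq f_{svcp}(K_{1,n})$. Note, though, that the paper supplies no further argument for this corollary: it is meant to follow by matching the value $4n-5$ against the value of $\sigma(K_{1,n})$ taken from the covering cover pebbling paper \cite{Cov}, not by re-deriving a lower bound for $\sigma$ from first principles.

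The genuine gap is your second half, and it is not merely unfinished --- the step would fail. Under the paper's Definition 4, one has $\sigma(K_{1,n})=n$ for $n\geq 2$, so the obstructing distribution of $4n-6\geq n$ pebbles you want to exhibit cannot exist. Concretely, take any distribution of $n$ pebbles on $K_{1,n}$ with centre $c$: if $c$ holds a pebble, the cover $\{c\}$ is already reached; if some leaf holds at least two pebbles, a single move places a pebble on $c$; otherwise every pebble lies alone on a leaf, so all $n$ leaves are occupied and the leaf set is itself a vertex cover. (The matching bound $\sigma(K_{1,n})>n-1$ comes from $n-1$ pebbles placed singly on $n-1$ leaves, which admits no move and covers nothing.) The feature you flagged as needing reconciliation --- that $\{c\}$ costs only two pebbles from any leaf --- is precisely why no reconciliation is possible: every large distribution either reaches $\{c\}$ or already sits on the leaf cover. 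Consequently the inequality $\sigma(K_{1,n})\geq 4n-5$ is false as a statement about Definition 4, and no choice of distribution can establish it; with that definition the two invariants actually differ for every $n\geq 2$ ($n$ versus $4n-5$), so the corollary's equality can only be sustained by importing the value of $\sigma(K_{1,n})$ asserted in \cite{Cov}, which is what the paper implicitly does and what your blind attempt, quite reasonably, could not reconstruct.
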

\subsubsection{Secure vertex cover pebbling number for path $P_n$}
\begin{Def}
Let $P_n= v_1,v_2,...,v_n$ denote the path on $n$ vertices. Define the subset $S_n\subset V(P_n)$ as follows:
\begin{itemize}
\item for $n\geq 2, v_{n-1}\in S_n$;
\item for $n\geq 3, v_{n-2}\in S_n$;
\item for $n\geq 5, v_{n-4}\in S_n$; and
\item for $n\geq 6$, we have:
  \begin{itemize}
            \item for all $2\leq k \leq n-5, v_k \in S_n$ if and only if $v_{k+5}\in S_n;$ and
            \item $v_1 \in S_n$ if and only if $n \not\equiv 4 \pmod{5}$
  \end{itemize}
\end{itemize}
\end{Def}
For example,
\begin{align*}
S_4&= \{v_2,v_3\},\\
S_{11}&=\{v_1,v_2,v_4,v_5,v_7,v_9,v_{10}\}, \text{and}\\
S_{18}&= \{v_1,v_2,v_4,v_6,v_7,v_9,v_{11},v_{12},v_{14},v_{16},v_{17}\}
\end{align*}
Because $\{v_i, v_{i+1}\}\cap S_n \neq \phi$ for all $1\leq i\leq n$, $S_n$ is a vertex cover of $P_n$.
\begin{Def}
For any subset $S\subseteq V(P)$, define its weight $w(S)=\sum_{v_j\in S} 2^{j-1}.$
\end{Def}

\noindent Note that $w(S)$ equals the number of pebbles a distribution stacked on $v_1$ needs to have in order to reach $S$ via pebbling steps. Further note that the secure vertex cover in Figure 1 are ordered from least to greatest weight.
\begin{Def}
Let $Sec(n)$ be the set of secure vertex covers of $P_n$ and define $w_n=min_{S\in Sec(n)}w(S)$.
\end{Def}
\noindent It is not difficult to see that $S_n\in Sec(n)$. Indeed, suppose that $v_i\notin S_n$. If $v_{i-2}\in S_n$ then $(S_n-\{v_{i-1}\})\cup \{v_i\}$ is a vertex cover. If $v_{i+2}\in S_n$ then $(S_n - \{v_{i+1}\}\cup \{v_i\}$ is a vertex cover. The only instance not handled by these two cases is when $i=2$, in which case $(S_n-\{v_1\})\cup \{v_2\}$ is a vertex cover.

\begin{Def}
For any $S\in Sec(n)$, define $S^{'}$ by $v_i\in S^{'}$ if and only if $v_{i+1}\in S$, for all $ 1\leq i\leq n-1.$
\end{Def}
\noindent We see that if $S\in Sec(n)$ then $S^{'} \in Sec(n-1)$. Moreover, $S=S_n$ implies that $S^{'}= S_{n-1}$. Additionally, $w(S^{'})=(w(S)-\chi_{S}(v_1))/2$, where the indicator function $\chi_X(v_i)=1$ if $v_i\in X$ and 0 otherwise.
\begin{lem}
For every integer $n\geq 2$ and every $S\in Sec(n)$, $w(S)=w_n$ if and only if $S=S_n$.
\end{lem}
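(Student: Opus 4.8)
The plan is to induct on $n$, proving the equivalent statement that $w(S)\ge w(S_n)$ for every $S\in Sec(n)$ with equality if and only if $S=S_n$; this simultaneously yields $w_n=w(S_n)$ and the claimed uniqueness. The base cases $2\le n\le 6$ (where $S_n$ is pinned down by the three top membership rules, together with the $P_6$ computation already displayed in Figure 1) I would check by directly minimizing weight over the finitely many secure vertex covers. For the inductive step, with $n\ge 7$, I would feed a minimizer through the shift map $S\mapsto S'$, exploiting the two facts recorded just before the lemma, namely $S'\in Sec(n-1)$ and $w(S)=2w(S')+\chi_S(v_1)$, together with the inductive hypothesis that $S_{n-1}$ is the unique minimum-weight element of $Sec(n-1)$.

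The first half of the step is pure bookkeeping. Let $S\in Sec(n)$ be a minimizer. By the inductive hypothesis $w(S')\ge w_{n-1}=w(S_{n-1})$, with equality exactly when $S'=S_{n-1}$. Since $S_n{}'=S_{n-1}$, the weight identity gives $w(S_n)=2w_{n-1}+\chi_{S_n}(v_1)\le 2w_{n-1}+1$. If $S'\ne S_{n-1}$ then $w(S')\ge w_{n-1}+1$ (weights are integers), so $w(S)=2w(S')+\chi_S(v_1)\ge 2w_{n-1}+2>w(S_n)$, contradicting minimality. Hence a minimizer must satisfy $S'=S_{n-1}$. Because the shift map is coordinatewise injective on $v_2,\dots,v_n$ and $S_n{}'=S_{n-1}$ as well, this forces $S$ to agree with $S_n$ on $v_2,\dots,v_n$, so that $S$ is either $S_n$ itself or $S_n$ with the membership of $v_1$ flipped. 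It then remains only to decide whether $v_1$ belongs to the optimal choice.

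The crux, and the step I expect to be the main obstacle, is showing that a minimum-weight lift is forced to contain $v_1$ precisely when $n\not\equiv 4\pmod 5$, matching $\chi_{S_n}(v_1)$. Since $v_2$ is the unique neighbour of $v_1$ in $P_n$, the endpoint is delicate: dropping $v_1$ can fail for two genuinely different reasons, and which one occurs depends on $n\bmod 5$ through the membership of $v_2$ and $v_3$ in $S_n$. Concretely, when $n\not\equiv 4$ I would show $S_n\setminus\{v_1\}$ is not a secure vertex cover by cases on the residue. If $n\equiv 0$ or $2\pmod 5$ then $v_2\notin S_n$, so the edge $v_1v_2$ is left uncovered and $S_n\setminus\{v_1\}$ is not even a vertex cover. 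If $n\equiv 1$ or $3\pmod 5$ then $v_2\in S_n$ but $v_3\notin S_n$, and because $v_2$ is the only possible defender of $u=v_1$, the security requirement forces $(S_n\setminus\{v_1,v_2\})\cup\{v_1\}$ to be a vertex cover, which it is not, since it misses the edge $v_2v_3$. Thus in every residue class with $n\not\equiv 4$ the lift without $v_1$ is invalid, so $v_1$ is forced and the minimizer equals $S_n$ with weight $2w_{n-1}+1$.

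Finally, when $n\equiv 4\pmod 5$ we have $v_1\notin S_n$, and since $S_n\in Sec(n)$ is already a valid secure vertex cover, the lift without $v_1$ (namely $S_n$ itself) is admissible and strictly lighter than the lift that adjoins $v_1$; hence the minimizer is again $S_n$, now of weight $2w_{n-1}$. In both regimes exactly one lift is optimal and it is $S_n$, which closes the induction and establishes both $w_n=w(S_n)$ and uniqueness. The only genuinely nontrivial ingredient is the endpoint case analysis of the third paragraph; everything else reduces to the weight identity, the inductive hypothesis, and the fact that the left endpoint $v_1$ has a single neighbour.
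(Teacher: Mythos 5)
Your proposal is correct and takes essentially the same route as the paper's proof: induction through the shift map $S\mapsto S'$, an integrality argument forcing a minimizer to satisfy $S'=S_{n-1}$ (hence to agree with $S_n$ off $v_1$), and a residue-class analysis at the endpoint showing the lighter lift $S_n\setminus\{v_1\}$ fails to be a vertex cover when $n\equiv 0,2 \pmod{5}$ and fails to be secure when $n\equiv 1,3\pmod{5}$. The only differences are cosmetic: you phrase the step via a minimizer rather than a strict counterexample, you check base cases up to $n=6$ where the paper uses only $n=2$, and you spell out the endpoint case analysis that the paper asserts without justification.
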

\begin{proof}
We proceed by induction and notice that the statement is true for $n=2$. Now we suppose that $n=k\geq 3$, and that the statement is true for every value of $2\leq n< k$.\\
Now suppose there is some $T\in Sec(k)$ such that $w(T)< w(S_k)$. Then
\begin{align*}
w(T^{'})&=(w(T)-\chi_T(v_1))/2\\
&< (w(S_k)-\chi_T(v_1))/2\\
&=(w(S_k)-\chi_{S_k}(v_1))/2+(\chi_{S_k}(v_1)-\chi_T(v_1))/2\\
&=w(S_{k-1})+(\chi_{S_k}(v_1)-\chi_T(v_1))/2\\
&\leq w(S_{k-1})+1/2
\end{align*}
and so $w(T^{'})\leq w(S_{k-1})$. However, the induction hypothesis implies that $w(T^{'})\geq w(S_{k-1})$, which implies that $w(T^{'})=w(S_{k-1})$, and consequently that $T^{'}= S_{k-1}$. Therefore, having $w(T)\leq w(S_k)$ means that $v_1\in S_k$(so $k\not\equiv  4\pmod{5})$ and $T= S_k - \{v_1\}$. But if $k$ mod 5 $\in \{0,2\}$ then $T$ is not a vertex cover, while if $k$ mod 5 $\in \{1,3\}$ then $T$ is not a secure vertex cover. Hence no such $T$ exists. 
\end{proof}
\begin{cor}
For $n\geq 0$, let $s=\lfloor\frac{n}{5}\rfloor$, $l=n$ mod 5 and $q_s=(2^{5s}-1)/31$. Then, $w_0=0, w_1=w_2=1, w_3=3, w_4=6$, and $w_n=2^l(13q_s)+w_l$ for all $n\geq 5$.
\end{cor}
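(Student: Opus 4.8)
The plan is to reduce everything to the explicit family $S_n$ and then isolate a clean period-5 recurrence. By Lemma 2 we have $w_n=w(S_n)$ for every $n\ge 2$, so it suffices to evaluate $w(S_n)$. The listed initial values are immediate: $S_2=\{v_1\}$, $S_3=\{v_1,v_2\}$ and $S_4=\{v_2,v_3\}$ give $w_2=1$, $w_3=3$, $w_4=6$, while $w_0=0$ and $w_1=1$ are the degenerate base values recorded in the statement.

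The engine of the proof is the recurrence
\[
w_n = w_{n-5} + 13\cdot 2^{\,n-5}, \qquad n\ge 5 .
\]
For $n\ge 7$ I would establish it through the disjoint decomposition $S_n = S_{n-5}\cup\{v_{n-4},v_{n-2},v_{n-1}\}$. The top block is read straight off the defining rules: $v_{n-1},v_{n-2},v_{n-4}\in S_n$, whereas $v_{n-3}$ and $v_n$ are named by no inclusion rule and hence excluded. For the lower block the key observation is that, for every $k\ge 2$, membership of $v_k$ in $S_n$ is determined solely by the residue $(n-k)\bmod 5$ (iterating $v_k\in S_n\iff v_{k+5}\in S_n$ up to the fixed top seeds gives membership exactly when $(n-k)\bmod 5\in\{1,2,4\}$), while the status of $v_1$ depends only on $n\bmod 5$. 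Replacing $n$ by $n-5$ leaves all of these residues unchanged, so $S_n\cap\{v_1,\dots,v_{n-5}\}=S_{n-5}$. The three new top vertices then contribute $2^{n-5}+2^{n-3}+2^{n-2}=13\cdot 2^{\,n-5}$ to the weight, which is the asserted increment.

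With the recurrence in hand I would unroll it from $n$ down to $l=n\bmod 5$ in steps of $5$. Writing $n=5s+l$ and summing the geometric progression,
\[
w_n = w_l + 13\sum_{t=1}^{s}2^{\,n-5t} = w_l + 13\cdot 2^{\,l}\,\frac{2^{5s}-1}{2^{5}-1} = 2^{\,l}(13q_s)+w_l ,
\]
where the last step uses $2^{5}-1=31$ and $q_s=(2^{5s}-1)/31$. This is precisely the claimed closed form.

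The main obstacle is the boundary bookkeeping in the decomposition. For $n\ge 7$ the set $S_{n-5}$ is a genuine instance of the construction and the residue argument applies verbatim, but the two smallest cases $n=5,6$ must be treated separately, since $S_0=\emptyset$ and the degenerate $S_1$ are not literally produced by the rules; here one checks directly that $w_5=13=w_0+13$ and $w_6=27=w_1+26$ against the stated base values, so that the recurrence is in force for all $n\ge 5$. I would also flag that a tempting single-step recurrence relating $w_n$ to $w_{n-1}$ is \emph{not} available: the shifted set obtained from $S_n$ need not coincide with $S_{n-1}$, so it is the period-5 structure, not a one-step halving identity, that yields the clean decomposition and hence the formula.
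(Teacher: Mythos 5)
Your proof is correct. The paper states this corollary with no proof whatsoever, so there is nothing to compare it against line by line; your derivation --- $w_n=w(S_n)$ from Lemma 2, the observation that for $k\geq 2$ membership of $v_k$ in $S_n$ depends only on $(n-k)\bmod 5$ (lying in $\{1,2,4\}$), with $v_1$ governed separately by $n\bmod 5$, the resulting disjoint decomposition $S_n=S_{n-5}\cup\{v_{n-4},v_{n-2},v_{n-1}\}$ for $n\geq 7$ together with direct checks at $n=5,6$, and the geometric-series unrolling $13\sum_{t=1}^{s}2^{n-5t}=2^{l}\cdot 13\,(2^{5s}-1)/31$ --- is the natural way to fill that gap, and every step checks out against the paper's stated values ($w_5=13$, $w(S_{11})=1+2+8+16+64+256+512$, etc.).

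Your closing caution is in fact sharper than you may realize. The paper's own text asserts that $S=S_n$ implies $S'=S_{n-1}$, and this claim is false precisely when $n\equiv 2\pmod 5$: for instance $(S_7)'=\{v_2,v_4,v_5\}$ whereas $S_6=\{v_1,v_2,v_4,v_5\}$, and worse, $\{v_2,v_4,v_5\}$ is not even a secure vertex cover of $P_6$, since the only neighbour of $v_1$ in it is $v_2$ and the swap $(S'\setminus\{v_2\})\cup\{v_1\}$ leaves the edge $v_2v_3$ uncovered. Consequently the one-step recurrence $w_n=2w_{n-1}+\chi_{S_n}(v_1)$ that the paper's shift machinery would suggest is simply wrong: it yields $w_7=2\cdot 27+1=55$, while the true value is $w_7=2^2\cdot 13+w_2=53$. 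So your period-5 recurrence $w_n=w_{n-5}+13\cdot 2^{n-5}$ is not merely an alternative route but the only correct one among the two. Note also that since you invoke only the \emph{statement} of Lemma 2 (to identify $w_n$ with $w(S_n)$), and not the paper's shift argument, this flaw in the paper does not propagate into your proof.
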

\noindent For any subset $S\subseteq V(P_n)$, define $S^{-}$ by $v_i \in S^{-}$ if and only if $v_{n+1-i}\in S$. For any $i\leq j$, denote the subpath of $P_n$ by $P[i,j]=\{v_i,...,v_j\}$ with $S[i,j]=S\cap P[i,j]$. Also, define $q_s=Q_s-Q_{s-1}=2^{5(s-1)}$, and note that $13 q_s> w_{n-s}.$
\begin{thm}
For all $n\geq 0$, we have $f_{svcp}(P_n)=w_n$.
\end{thm}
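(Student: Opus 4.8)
The plan is to prove the two inequalities $f_{svcp}(P_n)\ge w_n$ and $f_{svcp}(P_n)\le w_n$ separately, with essentially all of the difficulty residing in the upper bound. For the lower bound I would introduce the left-weight potential $\Phi(C)=\sum_{j}C(v_j)2^{j-1}$ of a distribution $C$ and check that a pebbling move toward $v_n$ leaves $\Phi$ unchanged while a move toward $v_1$ strictly decreases it; hence $\Phi$ is non-increasing under any sequence of moves. Since reaching a set $S$ forces the terminal distribution to carry potential at least $\sum_{v_j\in S}2^{j-1}=w(S)\ge w_n$, the distribution stacking $w_n-1$ pebbles on $v_1$ (which has potential exactly $w_n-1$) reaches no secure vertex cover. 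This single witness gives $f_{svcp}(P_n)\ge w_n$.

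For the upper bound I must show that every distribution $C$ with $|C|\ge w_n$ reaches some member of $Sec(n)$. My first step is a smoothing reduction in the spirit of the argument used for complete multipartite graphs above: given $C$ supported on two or more vertices, I would relocate pebbles so as to obtain a distribution $C'$ with $|C'|\ge|C|$, strictly smaller support, and the domination property that whenever $C'$ reaches a set $S$ so does $C$. Iterating collapses the analysis to the case in which $C$ is a single stack of $m\ge w_n$ pebbles on one vertex $v_i$, which reaches a secure vertex cover if and only if $m\ge c_i:=\min_{S\in Sec(n)}\sum_{v_j\in S}2^{|i-j|}$.

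It then remains to prove that $\max_i c_i=c_1=w_n$, that is, that stacking on an endpoint is the extremal case. The reflection involution $S\mapsto S^{-}$ gives $c_i=c_{n+1-i}$ and, since $c_1=\min_S w(S)$, Lemma 2 and Corollary 2 yield $c_1=c_n=w_n$. To handle an interior vertex $v_i$ I would split the path as $P[1,i]\cup P[i,n]$ and assemble a secure vertex cover from near-optimal covers of the two sides, tracking the pieces via $S[1,i]$ and $S[i,n]$; because the weights grow geometrically, the cost of such a centred cover stays below the endpoint cost, and the inequality $13q_s>w_{n-s}$ together with the closed form of Corollary 2 is precisely what quantifies this, giving $c_i\le w_n$. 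Combining the two bounds yields $f_{svcp}(P_n)=w_n$.

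The main obstacle is the smoothing reduction. Unlike the complete multipartite case, the path has no vertex symmetry, and the target here is not a single positive demand but the whole family $Sec(n)$, so the off-the-shelf cover-pebbling stacking theorem does not apply and the domination $C'\to S\Rightarrow C\to S$ must be verified uniformly over all secure vertex covers $S$. I expect this to be the delicate point, and I would attack it by always consolidating pebbles toward the endpoint whose side is harder to serve, using $S\mapsto S^{-}$ to reduce to one direction. Should the reduction prove intractable, the fallback is a direct strong induction on $n$: peel the top block $\{v_{n-4},v_{n-2},v_{n-1}\}$ of $S_n$, cover it from the right portion, and apply the inductive hypothesis to $P[1,n-5]$, the leftover budget again being controlled by $13q_s>w_{n-s}$.
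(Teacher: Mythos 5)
Your lower bound is correct, and it is the paper's own argument made explicit: the paper notes that a stack on $v_1$ needs exactly $w(S)$ pebbles to reach $S$ and invokes Lemma 2; your monovariant $\Phi$ is a clean formalization of that. The genuine gap is the smoothing reduction that carries your upper bound. The property you propose to verify move-by-move --- for \emph{every} $S\in Sec(n)$, if $C'$ reaches $S$ then $C$ reaches $S$ --- is false for consolidation on a path. Take $n=6$, $C(v_1)=26$, $C(v_6)=1$, and let $C'$ be the stack of $27$ pebbles on $v_1$. Then $C'$ reaches $S_6=\{v_1,v_2,v_4,v_5\}$ at cost exactly $27=w_6$, but $C$ cannot reach $S_6$: the lone pebble on $v_6$ lies outside $S_6$ and cannot move (and routing help to it through $v_6$ costs $32$ from $v_1$), so $C$ would need $27$ pebbles' worth at $v_1$ and has only $26$. (Both distributions are in fact good, since $C$ reaches $\{v_1,v_2,v_4,v_6\}$; what breaks is precisely your uniform domination certificate.) The weaker existential statement ``$C'$ reaches some secure cover $\Rightarrow$ $C$ reaches some secure cover'' is what the reduction really needs, but it cannot be checked locally by pushing pebbles toward a fixed endpoint: on $P_6$ the distribution with one pebble on each of $v_2,v_4,v_5,v_6$ reaches no secure cover ($\{v_2,v_4,v_5,v_6\}$ is not secure and no pebbling move is available), yet relocating the $v_6$ pebble to $v_1$ produces $\{v_1,v_2,v_4,v_5\}=S_6$, which is secure; reflecting this example defeats consolidation toward $v_n$ as well. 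So badness is not preserved by consolidating in either direction, no off-the-shelf stacking theorem applies (as you note, they require a single everywhere-positive demand), and the centerpiece lemma of your plan is both unproven and, as stated, false.

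What remains is your fallback, which is indeed the route the paper takes, but your one sentence records only half of it. The paper proves the strengthened statement that every distribution of $w_n$ pebbles reaches $S_n$ \emph{or} $S_n^{-}$, and splits $P_n$ into $P[1,n-5]$ and $P[n-4,n]$. Your sentence is the case $|C(P[1,n-5])|\geq w_{n-5}$: induct on the left block and spend the remaining $w_n-w_{n-5}$ pebbles on $S_n[n-4,n]=\{v_{n-4},v_{n-2},v_{n-1}\}$. But nothing in your proposal handles distributions concentrated on the right end (e.g., all $w_n$ pebbles on $v_n$), where induction on the left block is unavailable; the paper switches targets to the reflected cover there, using $|C(P[6,n])|\geq |C|-|C(P[1,n-5])|> w_n-w_{n-5}>w_{n-5}$ to induct on $P[6,n]$ toward $S_n^{-}[6,n]$ and covering $S_n^{-}[1,5]$ with the rest. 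In addition, if you induct with the unstrengthened hypothesis (``reaches some secure cover of $P[1,n-5]$''), you owe a pasting lemma: that any secure cover $T$ of the sub-path, together with $\{v_{n-4},v_{n-2},v_{n-1}\}$, is secure in $P_n$ --- the delicate point being the defense of $v_{n-5}$, whose only usable swap is through $v_{n-6}$ and is supplied exactly by the security of $T$ at its right endpoint. To repair the proposal, abandon the smoothing route and write out this two-case induction in full.
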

\begin{proof}
Let $v_1, v_2, ..., v_{n}$ be the vertices of the path $P_{n}, n\geq 5$ (the theorem is true for $n\leq 4$). In addition, Lemma 2 implies that $f_{svcp}(P_n)\geq w_n$ for all $n$, so we only need to prove the upper bound.\\

\noindent The upper bound can be proved by the method of induction on $n$. Now, consider the distribution of all the $w_n$ pebbles on the vertices of the path $P_{n}$. \\
We will show that this many pebbles is sufficient to reach either $S_n$ or $S_n^{-}$.
\begin{itemize}
\item If $|C(P[1,n-5])|\geq w_{n-5}$, then we are done: those pebbles reach $S_n[1,n-5]=S_{n-5}$ by induction, and the remaining $w_n-w_{n-5}=13q_s$ pebbles reach $S_n[n-4,n]$.
\item Otherwise 
\begin{align*}
|C(P[6,n])|&\geq |C(P[n-4,n])|\\
&=|C|-|C(P[1,n-5])|\\
&> w_n - w_{n-5}\\
&=13q_s\\
&>w_{n-5}
\end{align*}
and so we are done: those pebbles reach $S^{-}_n[6,n]=S^{-}_{n-5}$ by induction, and the remaining $w_n -w_{n-5}=13 q_s$ pebbles reach $S_n[1,5]$.
\end{itemize}
\end{proof}
\begin{figure}[h!]
\centering
\includegraphics[width=10 cm,height=4cm]{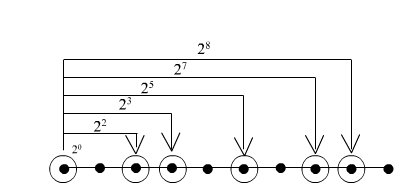}
\caption{Distribution of all the pebbles on the initial vertices of $P_{10}$}
\label{Sel}
\end{figure}
\begin{cor}
For a path $P_n$, $\sigma(P_n)=f_{svcp}(P_n)$ if and only if $n=1,2$.
\end{cor}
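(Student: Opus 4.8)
The plan is to recast both invariants as minima of the single weight function $w$ and compare the two minima. First I would establish, via the stacking principle for covering cover pebbling (the same normalization style used in the proof of Theorem~8, by which the worst distributions reduce to a single stack, and on a path to a stack on an endpoint), that $\sigma(P_n)=\min\{w(T): T\text{ a vertex cover of }P_n\}$: a stack of $m$ pebbles on $v_1$ reaches a set $T$ iff $m\ge w(T)$, the two endpoints give the same value by the reversal symmetry, and interior stacks are strictly cheaper. On the other hand, Theorem~9 together with the definition $w_n=\min_{S\in Sec(n)}w(S)$ gives $f_{svcp}(P_n)=\min\{w(S): S\text{ a secure vertex cover of }P_n\}$. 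Since every secure vertex cover is a vertex cover, these are minima of the \emph{same} weight function over a smaller family in the second case; this simultaneously recovers $\sigma(P_n)\le f_{svcp}(P_n)$ of Theorem~2 and shows that equality holds \emph{if and only if some minimum-weight vertex cover of $P_n$ is secure}. That equivalence is the statement I would actually prove.

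Next I would identify the minimum-weight vertex covers. Because $w$ is strictly increasing in the vertex indices and every extra vertex only adds weight, a cheapest cover is a minimum-cardinality cover pushed as far left as the covering constraint permits; concretely it is of the form $\{v_1,v_3,\dots,v_{n-1}\}$ when $n$ is even and $\{v_2,v_4,\dots,v_{n-1}\}$ when $n$ is odd. For the two small cases the claim is then immediate: for $P_2$ the cheapest cover $\{v_1\}$ is already secure, since the single non-cover vertex $v_2$ is defended by its neighbour, so $\sigma(P_2)=f_{svcp}(P_2)$; the degenerate case $P_1$ is handled by the convention (under which $f_{svcp}(P_1)=w_1=1$) that the lone vertex is its own cover, again giving equality.

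For $n\ge 3$ I would show that no minimum-weight vertex cover is secure, which forces the strict inequality $\sigma(P_n)<f_{svcp}(P_n)$. The key point is that a minimum-cardinality cover of a path is \emph{tight} (each cover vertex is the unique cover of some incident edge) and hence rigid. In particular $P_n$ always has a non-cover endpoint whose only neighbour in the cover is irreplaceable: for odd $n$ the vertex $v_1\notin T$ has unique cover-neighbour $v_2$, and swapping $v_2$ for $v_1$ uncovers the edge $\{v_2,v_3\}$; symmetrically for even $n$ the vertex $v_n\notin T$ has the irreplaceable neighbour $v_{n-1}$. Thus this endpoint has no admissible defender, so $T$ is not secure, and the cheapest secure cover must use a heavier vertex set and satisfy $w_n>\min_T w(T)=\sigma(P_n)$. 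Combining with the first paragraph pins the equality exactly at $n\in\{1,2\}$.

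The main obstacle I anticipate is the very first step: making the reduction $\sigma(P_n)=\min_T w(T)$ rigorous, i.e. justifying that the extremal distribution for covering cover pebbling is an endpoint stack; I would rely on the stacking/weight-function machinery already available for $\sigma$ rather than reprove it, since the remainder is purely combinatorial. Two routine loose ends remain: confirming that the minimum-weight vertex cover is the unique tight cover above (so that \emph{every} weight-$\sigma(P_n)$ cover is non-secure, not merely one of them), which one can instead bypass by comparing the closed form of $w_n$ from Corollary~2 directly against $\min_T w(T)$; and reconciling the $n=1$ boundary, where the empty set is technically a vertex cover, with the stated convention.
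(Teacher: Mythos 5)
Your proposal is correct, but it takes a genuinely different route from the paper's. The paper states this corollary without proof: the intended argument is a direct numerical comparison of $f_{svcp}(P_n)=w_n$ (Theorem 9, with the closed form of Corollary 2) against the value of $\sigma(P_n)$ imported from the covering cover pebbling literature \cite{Cov}, namely $(2^n-1)/3$ for even $n$ and $(2^n-2)/3$ for odd $n$, which is strictly below $w_n$ once $n\geq 3$ and equals $w_n=1$ for $n=1,2$. You instead recast both invariants as minima of the \emph{same} weight function $w$ over nested families (all vertex covers versus secure ones), reduce equality to the question of whether a minimum-weight vertex cover is secure, identify that minimizer explicitly ($\{v_1,v_3,\dots,v_{n-1}\}$ or $\{v_2,v_4,\dots,v_{n-1}\}$), and show its endpoint defense fails exactly when $n\geq 3$. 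The paper's route is shorter given the formulas; yours explains structurally \emph{why} the gap opens, and never needs Corollary 2. Three caveats, none fatal: first, the hard direction of $\sigma(P_n)=\min_T w(T)$ (that every distribution of that many pebbles reaches some cover) must still be cited from \cite{Cov}, since the stacking theorem for cover pebbling does not automatically apply to targets of the form ``some vertex cover''---but the paper relies on \cite{Cov} equally; second, the uniqueness of the minimum-weight cover, which you flag, is genuinely routine (any cover containing $v_n$ or missing $v_{n-1}$ exchanges to one of strictly smaller weight; induct on $P_{n-2}$) and is needed so that non-security of one minimizer rules out all; third, at $n=1$ the statement holds only under the convention excluding the empty vertex cover (else $\sigma(P_1)=0\neq 1=w_1$), a convention the paper also adopts silently.
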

\subsubsection{Secure vertex cover pebbling number for some wheel related 
graphs}
\begin{Def}
\noindent A \textbf{Friendship graph} denoted by $F_n$ can be constructed by joining $n$ cycles $C_3$ with a common centre vertex called the apex vertex\cite{Frien}. 
\end{Def}

\noindent We prove the following lemma to find the secure vertex cover pebbling number for friendship graph $F_n$.
\begin{lem}
The secure vertex cover number of the friendship graph $F_n$, $\alpha_{s}(F_n)=n+1.$
\end{lem}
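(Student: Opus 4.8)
The plan is to bound $\alpha_s(F_n)$ below by the ordinary vertex cover number and above by exhibiting an explicit securable cover of that same size. Write the apex as $c$, and label the two non-apex vertices of the $i$-th triangle as $a_i$ and $b_i$, so that $V(F_n)=\{c\}\cup\{a_i,b_i:1\le i\le n\}$ and the edges of triangle $i$ are $ca_i$, $cb_i$, $a_ib_i$. For the lower bound I would first use that every secure vertex cover is in particular a vertex cover, and then argue that no vertex cover of $F_n$ has fewer than $n+1$ vertices. The $n$ edges $a_1b_1,\dots,a_nb_n$ form a matching, so any cover uses at least $n$ vertices; and if a cover $S$ used exactly $n$, then each of its vertices would have to cover exactly one of these matching edges, forcing $S$ to consist of one endpoint of each $a_ib_i$ and nothing else. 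In particular $c\notin S$, and then in each triangle the spoke edge joining $c$ to the unchosen endpoint is left uncovered, a contradiction. Hence $\alpha_s(F_n)\ge n+1$.

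For the upper bound I would take the explicit set $S=\{c,a_1,a_2,\dots,a_n\}$, of size $n+1$, and verify that it is a secure vertex cover. It is a vertex cover because $c$ covers every spoke edge $ca_i,cb_i$ while each $a_i$ covers $a_ib_i$. For the defense condition, the only vertices outside $S$ are the $b_i$; given an attack at $b_i$, I would use the guard $v=a_i\in S\cap N(b_i)$ and check that the exchanged set $(S\setminus\{a_i\})\cup\{b_i\}$ is again a vertex cover. Indeed it differs from $S$ only in which endpoint of triangle $i$ is chosen, so $c$ still covers all spokes and $b_i$ now covers $a_ib_i$, while every other triangle is untouched. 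This gives $\alpha_s(F_n)\le n+1$, matching the lower bound.

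The argument is short, so the care is concentrated in two places. First, the lower-bound step must rule out the size-$n$ cover cleanly, and this is exactly where the structural fact that the triangle bases $a_ib_i$ form a matching, together with the observation that excluding $c$ exposes a spoke edge, does the work. Second, one must confirm that the exchange property needs to be checked only at the externally attacked vertices $b_i$ of $V\setminus S$; I expect correctly framing this defense condition to be the main point to pin down, and it is precisely the triangle-by-triangle independence of $F_n$ that makes each exchange trivially a cover. Small cases such as $n=1$, where $F_1=C_3$ and $S=\{c,a_1\}$, can be verified directly and agree with the value $n+1$.
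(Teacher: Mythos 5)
Your proposal is correct and follows essentially the same route as the paper: the same explicit set (apex plus one vertex per triangle) for the upper bound, and the same counting idea (one endpoint per base edge is forced, and omitting the apex exposes the spoke edges) for the lower bound, which you phrase via the matching $\{a_ib_i\}$ while the paper argues apex-first. If anything, your write-up is more complete than the paper's, since you explicitly verify the exchange condition $(S\setminus\{a_i\})\cup\{b_i\}$ at each $b_i\notin S$, a step the paper asserts without checking.
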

\begin{proof}
Let $u$ be the apex vertex of the friendship graph $F_n$ and let the remaining vertices of the cycle $C_i, 1\leq i\leq n$, be $u_{ij}, 1\leq i\leq n, j=1,2.$ \\
We claim that any one of the $u_{ij}, 1\leq i \leq n, j=1,2$ from each cycle $C_i, 1\leq i \leq n$ and the apex vertex forms a secure vertex cover $S$ for the friendship graph $F_n$. Clearly, the apex vertex $u$ belongs to the set $S$. If not, to cover all the edges, we need all the $2n$ vertices of the cycle in the set $S$ which is a contradiction. Suppose any one of the  $u_{ij}, 1\leq i \leq n-1, j=1,2$ from each cycle $C_i, 1\leq i \leq n-1$ and the apex vertex belongs to the set $S$, then $S$ does not form a vertex cover and hence not a secure vertex cover also. Therefore, $\alpha_{s}(F_n)=n+1.$\\
\begin{thm}
For a friendship graph $F_n$, the secure vertex cover pebbling number, $f_{svcp}(F_n)= 4(n-1)+3$.
\end {thm}
\noindent By Lemma 2, we observe that in order to produce a secure cover solution we need to place at least one pebble on any one vertex of each cycle and the apex vertex. Let the proof be divided into the following cases based on the pebbles distributed to the apex vertex. \\
\textbf{Case 1.} Apex vertex $u$ has at least $2n+1$ pebbles\\
Since, each $u_{ij}$, $1\leq i\leq n$, $j=1,2$ is at a distance of one from the apex vertex, we require a maximum of $2n$ pebbles to place a pebble on $n$ vertices of $n$ cycles. Then, by placing one pebble on the apex vertex, we are done.\\

\noindent \textbf{Case 2.} Apex vertex $u$ has exactly $t< 2n+1$ pebbles\\
In this case, a minimum of $4(n-1)+3-(2n+1)= 2n+2$ pebbles are distributed to the vertices $u_{ij}, 1\leq i\leq n, j=1,2$ of $C_{i}, 1\leq i\leq n$ of $F_n$. Now, retain a maximum of two pebbles on each cycle $C_{i}, 1\leq i\leq n$ and transfer the remaining pebbles to the apex vertex by a sequence of pebbling moves. Let us assume that vertices of $m$ cycles $m\leq n$ of $F_n$ receive a maximum of two pebbles each. \\
\textbf{Case 2.1.} $m=1$ and $t=0$.\\
Here, we have distributed all the $4(n-1)+3$ pebbles to any one cycle $C_{i}, 1\leq i\leq n$, say, $C_1$ of $F_n$. Since, $4(n-1)+3$ is odd, for any distribution of pebbles to $C_1$, we can transfer $\frac{4(n-1)+2}{2}$ pebbles to the hub vertex and it is sufficient to place a pebble on $(n-1)$ vertices of cycle $C_i, 2\leq i\leq n$ and the apex vertex by a sequence of pebbling moves.\\
\textbf{Case 2.2.} $m=1$ and $t\geq 1$.\\
In this case, all the remaining $4(n-1)+3-t$ pebbles are distributed to any one of the cycle $C_{i}, 1\leq i\leq n$ of $F_n$. Retain a maximum of two pebbles on the cycle and transfer the remaining to the apex vertex.\\
Thus, the apex vertex has $t+\lfloor\frac{4(n-1)+3-t-2}{2}\rfloor= t+2(n-1)+\lfloor\frac{1-t}{2}\rfloor$ pebbles. Moreover, in order to produce a secure vertex cover solution we require $2(n-1)+1$ pebbles as we need to place a pebble on any one vertex of the remaining $(n-1)$ cycles and the hub vertex of $F_n$.\\
Since, $ t+2(n-1)+\lfloor\frac{1-t}{2}\rceil- (2(n-1)+1)\geq 0$ as $t\geq 1$, which completes the proof.\\
\textbf{Case 2.3.} $m\geq 2$\\
In this case, we use a maximum of $2m$ pebbles to place a minimum of one pebble on $m$ cycles of $F_n$. Thus, the apex vertex has a minimum of $t+\lfloor\frac{4(n-1)+3-t-2m}{2}\rfloor= t+2(n-1)+1-m+\lfloor\frac{-t}{2}\rfloor$ pebbles on it.\\
However, in order to produce a secure vertex cover solution, we require a minimum of $2(n-m)+1$ pebbles since we need to place at least one pebble on any one vertex of the remaining $n-m$ cycles and the apex vertex $u$ of $F_{n}$.\\
Now, $t+2(n-1)+1-m+\lfloor\frac{-t}{2}\rfloor- (2(n-m)+1)= m-2+t+\lfloor\frac{-t}{2}\rfloor \geq 0 $ as $t\geq 0$ and $m\geq2$, we are done with the proof.
\end{proof}
\begin{cor}
For a friendship graph $F_n$, $\alpha(F_n)=f_{svcp}(F_n)$.
\end{cor}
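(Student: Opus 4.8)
Reading the statement as $\sigma(F_n)=f_{svcp}(F_n)$, where $\sigma$ is the covering cover pebbling number (this is the natural interpretation, matching the parallel corollary $f_{svcp}(K_{1,n})=\sigma(K_{1,n})$ and the value $f_{svcp}(F_n)=4(n-1)+3=4n-1$ just established), the plan is to squeeze $\sigma(F_n)$ against this common value from both sides. The upper half is immediate: the bound $\sigma(G)\le f_{svcp}(G)\le\gamma(G)$ of Theorem 2 gives $\sigma(F_n)\le f_{svcp}(F_n)=4n-1$ with no extra work. Thus the entire content of the corollary lies in the reverse inequality $\sigma(F_n)\ge 4n-1$.

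To establish $\sigma(F_n)\ge 4n-1$ I would exhibit a single distribution of $4n-2$ pebbles that reaches no vertex cover of $F_n$, which forces $\sigma(F_n)>4n-2$. Writing $u$ for the apex and $u_{i1},u_{i2}$ for the two outer vertices of the $i$-th triangle, I would stack all $4n-2$ pebbles on one outer vertex, say $u_{11}$, and then invoke the standard stacking fact (the same observation underlying the weight function $w$ used for paths): a pile of $w$ pebbles on a vertex $v$ reaches a target set $S$ precisely when $w\ge\sum_{s\in S}2^{d(v,s)}$. From $u_{11}$ the relevant distances are $d(u_{11},u)=d(u_{11},u_{12})=1$ and $d(u_{11},u_{ij})=2$ for all $i\neq 1$, so the problem reduces to computing the minimum of $\sum_{s\in S}2^{d(u_{11},s)}$ over all vertex covers $S$ of $F_n$.

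The crux is this minimization, which splits into two cases. If $u\in S$, then $S$ need contain only one outer vertex per triangle, and the cheapest such cover keeps $u_{11}$ (weight $1$), the apex (weight $2$), and one weight-$4$ vertex from each of the remaining $n-1$ triangles, for total weight $1+2+4(n-1)=4n-1$. If $u\notin S$, then every edge $\{u,u_{ij}\}$ forces all $2n$ outer vertices into $S$, of weight at least $1+2+4(2n-2)=8n-5>4n-1$. Hence the minimum vertex-cover weight seen from $u_{11}$ is exactly $4n-1$, so $4n-2$ pebbles placed there cannot reach any cover, giving $\sigma(F_n)\ge 4n-1$. Combined with the upper bound this yields $\sigma(F_n)=4n-1=f_{svcp}(F_n)$.

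The only genuine obstacle is the case analysis in the last step: one must confirm that dropping the apex from the cover never saves weight (it instead forces every outer vertex in, which is far more expensive) and that each of the $n-1$ distant triangles really costs $4$ rather than being coverable more cheaply through the apex. The stacking-weight formula makes both checks routine, so beyond this bookkeeping the argument is short, with Theorem 2 supplying one inequality essentially for free.
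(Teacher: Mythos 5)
Your reading of the statement is the right one, and it needs defending: the paper never defines $\alpha(G)$, and among the invariants it does use, only the covering cover pebbling number $\sigma$ makes the claim true --- $\alpha_s(F_n)=n+1$ (Lemma 3), the independence and vertex cover numbers of $F_n$ are $n$ and $n+1$, and $\gamma(F_n)=8n-3$, none of which equals $f_{svcp}(F_n)=4n-1$ --- so $\alpha$ must be a slip for $\sigma$, in line with Corollaries 1 and 2. Your proof is correct, but note that it is not a variant of the paper's argument: the paper states this corollary with no proof at all and never computes $\sigma(F_n)$, so your argument supplies the missing content. The upper bound $\sigma(F_n)\le f_{svcp}(F_n)=4n-1$ is, as you say, immediate from Theorems 2 and 10. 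For the lower bound, the stack of $4n-2$ pebbles on an outer vertex $u_{11}$ does the job: any cover containing the apex costs at least $1+2+4(n-1)=4n-1$ as seen from $u_{11}$, while any cover omitting the apex must contain all $2n$ outer vertices and costs $8n-5\ge 4n-1$; the stacking fact you invoke (a stack of $w$ pebbles on $v$ reaches $S$ if and only if $w\ge\sum_{s\in S}2^{d(v,s)}$) is exactly the fact the paper itself uses without proof when it asserts that $w(S)$ is the cost of reaching $S$ from a stack on $v_1$ of a path. One pedantic correction: your strict inequality $8n-5>4n-1$ fails at $n=1$, where it is an equality; since all you need is that every vertex cover costs at least $4n-1$ from $u_{11}$, the conclusion is unaffected, but the case $u\notin S$ should be stated with $\ge$ rather than $>$.
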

\begin{Def}
\noindent A \textbf{Wheel graph} denoted by $W_n$ on $n+1$ vertices is the graph obtained from $K_1+C_n$, where $C_n$ is a cycle with $n$ vertices\cite{Cov}.
\end{Def}
\begin{thm}
For a wheel graph, $W_{5s+l}$, $0\leq l\leq 4$, $(5s+l\geq 3)$, the secure vertex cover pebbling number is given by, \\
$f_{svcp}(W_{5s+l})=$
$\begin{cases}
12s-1,\,\,\, l=0\\
12s+3,\,\,\, l=1\\
12s+7, \,\,\,\,l=2,3\\
12s+11,\, l=4\\
\end{cases}$
\end{thm}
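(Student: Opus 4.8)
My plan is to first pin down the structure of the secure vertex covers of $W_n$ and, for each single vertex, the cost of reaching the cheapest one; then to obtain the lower bound from an extremal stacked distribution together with a case analysis modulo $5$; and finally to obtain the matching upper bound by a collapsing argument that reduces an arbitrary distribution to the stacked case, combined with the cyclic analogue of the inductive scheme used for $P_n$. Throughout, the outer bounds $\sigma(W_n)\le f_{svcp}(W_n)\le \gamma(W_n)$ frame the estimate.

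\textbf{Step 1 (secure covers of the wheel).} Write $c$ for the hub and $v_1,\dots,v_n$ for the rim cycle $C_n$. I would first show that every secure vertex cover $S$ either equals the whole rim $\{v_1,\dots,v_n\}$ or contains $c$; in the latter case $T=S\cap C_n$ must be a vertex cover of $C_n$, so the missing set $M=C_n\setminus T$ is independent, and the security condition forces that no $v_i\in M$ has both $v_{i-2}\in M$ and $v_{i+2}\in M$. Translating this into the gap sequence of $M$ around the cycle, the condition is exactly that no two consecutive gaps equal $2$; hence the densest admissible $M$ follows the period-$5$ pattern $2,3,2,3,\dots$, giving $|M|_{\max}=2\lfloor n/5\rfloor$ up to a correction depending on $l$, and recovering $\alpha_s(W_n)=n-|M|_{\max}+1$. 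This period-$5$ behaviour is the cyclic counterpart of the set $S_n$ used for paths, and is precisely what produces the five cases $l=0,1,2,3,4$.

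\textbf{Step 2 (lower bound).} Since $\operatorname{diam}(W_n)=2$ for $n\ge 4$, from a vertex holding all of the pebbles the cost of depositing one pebble on each target is additive: reaching $c$ costs $2$, reaching a rim neighbour costs $2$, and reaching a non-adjacent rim vertex costs $4$. Stacking everything on a rim vertex $v_1$, reaching $\{c\}\cup T$ costs $2+\sum_{v_i\in T}c_i$ with $c_1=1$, $c_2=c_n=2$, and $c_i=4$ otherwise, which equals $(4n-5)-\sum_{v_i\in M}c_i$. The cheapest reachable secure cover is therefore the one whose admissible missing set $M$ has \emph{maximum} total cost $\sum_{v_i\in M}c_i$; because the cost-$4$ rim vertices $v_3,\dots,v_{n-1}$ form a path of length $n-3$ on which an admissible $M$ can occupy only about $2(n-3)/5$ places, the optimiser is forced to spend two of its missing vertices on the cheap neighbours $v_2,v_n$. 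Carrying out this optimisation separately for each residue $l=n\bmod 5$ is intended to reproduce the five stated values $12s-1,\,12s+3,\,12s+7,\,12s+7,\,12s+11$, and placing one fewer pebble than this on $v_1$ certifies the lower bound.

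\textbf{Step 3 (upper bound) and the main obstacle.} For the matching upper bound I would argue, as in the complete multipartite proof, by a sequence of pebble-smoothing moves that never decrease the family of reachable secure covers, using the hub as a relay to consolidate pebbles until it suffices to treat distributions concentrated near a single rim vertex, which are then bounded against the worst stack of Step~2. Alternatively, and probably more cleanly, I would cut the rim cycle at a rim vertex carrying few pebbles, turning the task into covering a secure pattern on the resulting path and invoking $f_{svcp}(P_n)=w_n$ with its period-$5$ induction; the hub, being at distance $1$ from every rim vertex, costs only $2$ to occupy and can also ferry surplus pebbles across the cut. The hard part is exactly here: unlike the multipartite case, the hub links all rim vertices at equal cheap cost, so a \emph{spread} distribution can be more resistant to covering a cyclically structured target than a single stack, and one must prove that the smoothing step loses no reachability when pebbles are merged across the hub, i.e.\ that the stacked distribution of Step~2 is genuinely extremal. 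Pinning down the additive constant in each residue class, and checking the small wheels where the cost-$4$ rim path is too short for the period-$5$ count to stabilise, is the delicate bookkeeping the argument must handle case by case.
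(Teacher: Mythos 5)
Your proposal does not reach a proof, and the decisive gap is exactly where you flag it: Step~3 is a plan, not an argument. You offer two strategies (pebble-smoothing toward a stacked distribution; cutting the rim and invoking $f_{svcp}(P_n)=w_n$) and then concede that the step that would make either one work --- that merging pebbles across the hub never shrinks the family of reachable secure covers, i.e.\ that stacked distributions are genuinely extremal --- is unproven. This is not deferred bookkeeping: stacking/collapsing arguments (including the one used in the paper's complete multipartite proof) are justified there because one can check, move by move, that reachability of the relevant targets is preserved; here the adversary's distribution is measured against a \emph{menu} of secure covers, and nothing you wrote excludes a spread distribution that is expensive for every individual cover even though every stack is cheap for at least one. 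The paper's proof of the upper bound never needs such an extremality theorem. It splits on the number $p$ of pebbles sitting on the hub: if the hub is rich (for $l=0$, $p\ge 6s+1$), it pays $2$ per rim target for the $3s$ rim vertices of a secure cover and keeps one pebble for itself; if the hub is poor, then the rim necessarily carries many pebbles, and these are consolidated onto the hub (retaining a bounded number on chosen rim vertices) by a direct counting argument. Some device of this kind --- or an actual proof of your smoothing claim --- is required before your Step~3 exists at all.

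There is also a concrete inconsistency between your Steps~1 and~2. Under the reading of security you adopt in Step~1 (defence required only against attacks at $u\notin S$), the admissible missing sets $M$ are those avoiding two consecutive gaps of $2$, with densest pattern $2,3,2,3,\dots$; but then your own optimisation in Step~2 does \emph{not} reproduce the stated values. Take $W_{10}$ ($s=2$, $l=0$): the set $M=\{v_3,v_5,v_8,v_{10}\}$ is admissible, so $S=\{h,v_1,v_2,v_4,v_6,v_7,v_9\}$ is a secure cover, and a stack on $v_1$ reaches it at cost $1+2+2+4\cdot 4=21$. Hence a stack of $12s-2=22$ pebbles already reaches a secure cover, and no stacked distribution can certify $f_{svcp}\ge 12s-1$; in general your optimisation gives a cheapest stacked cost of $12s-3$ for $l=0$, so the best lower bound obtainable from Step~2 is $12s-3$, two short of the claim. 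The discrepancy traces to the definition of security: the paper's definition quantifies over all $u\in V$, including $u\in S$, and under that reading covers such as $\{h,v_1,v_2,v_4,\dots\}$ are not secure (the attack at $h$ has no defender, since no rim vertex of that pattern is removable), and missing rim vertices are forced pairwise to distance at least $5$, which changes $\alpha_s$, the admissible $M$, and all the constants. You must fix one reading and carry both the characterization and the optimisation through consistently; as written, your assertion that the Step~2 computation ``is intended to reproduce the five stated values'' is unverified, and on your own definitions it is false.
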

\begin{proof}
Let the vertex set of the wheel graph $W_{5s+l}$ be defined as $V(W_{5s+l})= \{h, u_1, u_2,...,u_{5s+l}\}$, where $h$ is the hub vertex of $W_{5s+l}$ and the edge set as $E(W_{5s+l})=E(C_{5s+l})\cup \{hu_1, hu_2,...,hu_{5s+l}\}$.\\

\noindent\textbf{Case 1.} $l=0$\\
Here, any one of the following sets $\{h,v_1, v_2$, $v_4$, $v_6,v_7,v_9,...\}$, $\{h,v_2, v_3$, $v_5$, $v_7,v_8,v_{10},...\}$, $\{h,v_2, v_4,v_5,v_7,v_9,v_{10},...\}$, $\{h,v_1, v_3,v_4,v_6,v_8,v_{9},...\}$ forms the secure vertex cover for $W_{5s+l}$. If we place all the pebbles on any $v_i$ of $W_{5s+l}$, then we cannot produce a secure vertex cover solution with $12s-2$ pebbles. Thus, we conclude that $f_{svcp}(W_{5s+l})\geq 12s-1$.\\
Now, to complete the proof, it remains to prove that for any distribution of $12s-1$ pebbles on $W_{5s+l}$, we should be able to place at least one pebble on all the vertices of any one of the secure vertex cover set. Let the proof be divided into the following sub-cases based on the distribution of pebbles to the hub vertex. \\
\noindent\textbf{Case 1.1.} Hub vertex with at least $6s+1$ pebbles\\
A minimum of $6s$ pebbles are required to place at least one  pebble on all the $3s$ vertices of the secure vertex cover for the cycle $C_{5s+l}$ of $W_{5s+l}$. Eventually, by placing the one remaining pebble on the hub vertex, we are done.\\
\noindent\textbf{Case 1.2.} Hub vertex has exactly $p< 6s+1$ pebbles\\
Consequently, we are left with  at least $12s-2-(6s+1)=6s - 1$ pebbles on the vertices of the cycle $C_{5s+l}$ of $W_{5s+l}$. Among the four possible secure vertex cover sets, let us choose a set, say $S$, having more vertices with pebbles when compared to the other three possible sets. If all the four secure vertex cover sets have the same number of vertices with pebbles, then we choose a set that has more vertices with an odd number of pebbles. If not, choose any one of the sets from the possible four sets. Let $t$ be the number of vertices in the above-chosen set that have pebbles on it. In order to produce a secure cover solution, we need to place a pebble on the remaining $3s-t$ vertices of $S$ and the hub vertex. Let the number of vertices in the remaining three sets, say, $S_1$, $S_2$ and $S_3$ that have pebbles on it be $t_1$, $t_2$ and $t_3$ respectively. We observe that some vertices $v_i, 1\leq i\leq 5s$ belong to more than one set. In this case, we need to consider those vertices belongs only to the set $S$. Clearly, $t_1, t_2, t_3\leq t$ and hence $\lfloor\frac{-(t_1+t_2+t_3)}{3}\rfloor < t$ since each $t_i\geq 0$, $1\leq i\leq 3 $.\\
If possible, let us place a pebble on the $3s-t$ vertices of $S$ by a pebbling move when any of the vertices of $S$ is adjacent to a vertex of $S_1$ or $S_2$ or $S_3$ that have at least 2 pebbles on it. Let us assume that the $k$ number of vertices of set $S$ get pebbles in this way. Eventually, we use $2k$ number of pebbles in order to place a pebble on $k$ vertices of set $S$. Retain a maximum of zero or one pebble on the sets  $S_1$, $S_2$, $S_3$ and keep a maximum of one or two pebbles on $S$. Transfer all the remaining pebbles on the sets $S$, $S_1$, $S_2$ and $S_3$ to the hub vertex by a sequence of pebbling moves.\\
After transferring all the pebbles to the hub vertex we are left with a minimum of $p+\lfloor\frac{12s-1-p-2t-2k-t_1-t_2-t_3}{2}\rfloor$ =$ p+6s-t-k+\lfloor\frac{-1-p-t_1-t_2-t_3}{2}\rfloor$ pebbles on the hub vertex.\\
However, to place at least one pebble on each $3s-t-k$ vertices of set $S$ and the hub vertex, we need $1+2(3s-t-k)$ pebbles on the hub vertex.\\
Since, $ p+6s-t-k+\lfloor\frac{-1-p-t_1-t_2-t_3}{2}\rfloor-(1+2(3s-t-k))=p+t+k+\lfloor\frac{-1-p-t_1-t_2-t_3}{2}\rfloor-1>0$ as $k\leq t_1, t_2, t_3\leq t$ and $\lfloor\frac{-(t_1+t_2+t_3)}{3}\rfloor < t$, we are done.\\

\noindent \textbf{Case 2.} $l=1$\\
$\{h,v_1, v_2,v_4,v_6,v_7,v_9,...,v_{5s}\}$ or $\{h,v_1, v_2,v_4,v_6,v_7,v_9,...,v_{5s+1}\}$ or $\{h,v_2, v_3$, $v_5,v_7,v_8,v_{10},..., v_{5s+1}\}$ or $\{h,v_2, v_3,v_5,v_7,v_8,v_{10},..., v_{1}\}$ or $\{h,v_2, v_4,v_5,v_7,v_9$, $v_{10},...v_{5s+1}\}$ or $\{h,v_2, v_4,v_5,v_7,v_9,v_{10},...v_{1}\}$ or $\{h,v_1, v_3,v_4,v_6,v_8,v_{9},...v_{5s+1}\}$ or $\{h,v_1, v_3,v_4,v_6,v_8,v_{9},...v_{5s}\}$ are the different possible secure vertex cover for $W_{5s+l}$. We can't produce a secure vertex cover solution if we place all the $12s+2$ pebbles on any $v_i$ of $W_{5s+l}$. Therefore, $f_{svcp}(W_{5s+l})\geq 12s+3$.\\
\noindent \textbf{Case 2.1} Hub vertex has at least $6s+3$ pebbles\\
Here, $6s+2$ pebbles are required to place at least one pebble on all the $3s+1$ vertices of the secure vertex cover for the cycle $C_{5s+l}$ of $W_{5s+l}$. Then, by placing the left-out pebble on the hub vertex we are done.\\
\noindent \textbf{Case 2.2} Hub vertex has exactly $p < 6s+3$ pebbles. \\
Thus, the cycle $C_{5s+1}$ of $W_{5s+1}$ has at least $12s+3-(6s+3)=6s$ pebbles. Then, by proceeding as in Case 1.2 we can produce a secure vertex cover solution for $W_{5s+l}$.\\

\noindent \textbf{Case 3.} $l=2,3$\\
Consider the case of placing $12s+6$ pebbles on any $v_i$ of $W_{5s+l}, l=2,3$. Then, it is not possible to place a pebble on all the vertices of any secure vertex cover by a sequence of pebbling moves. Therefore, $f_{svcp}(W_{5s+l})\geq 12s+7, l=2,3$.\\
\textbf{Case 3.1} Hub vertex has at least $6s+5$ pebbles\\
Now, in order to place at least one pebble on $3s+2$ vertices of $C_{5s+l}$ and the hub vertex of the secure vertex cover of $W_{5s+l},l=2,3$, we need at least $6s+5$ pebbles and hence we are done. \\
\textbf{Case 3.2} Hub vertex has exactly $p< 6s+5$ pebbles\\
In this case, the cycle $C_{5s+l}$ of $W_{5s+l}, l=2,3$ has at least $12s+7-(6s+5)=6s+2$ pebbles and by proceeding as in Case 1.2 we can produce a secure vertex cover solution for $W_{5s+l},l=2,3$.\\

\noindent \textbf{Case 4.} $l=4$\\
Consider the distribution of placing $12s+6$ the pebbles on any $v_i$ of $W_{5s+l}$. Then it is not possible to produce a secure vertex cover solution and hence $f_{svcp}(W_{5s+l})\geq 12s+11$.\\
\textbf{Case 4.1} Hub vertex has at least $ 6s+7$ pebbles\\
 To place a pebble on $3(s+1)$ vertices of $C_{5s+l}$ and the hub vertex of the secure vertex cover of $W_{5s+l}$, we need at least $6(s+1)+1$ pebbles on the hub vertex and we are done. \\
\textbf{Case 4.2} Hub vertex has exactly $p<6s+7$ pebbles\\
Eventually, there exists at least $12s+11-(6(s+1)+1)=6s+4$ pebbles on the cycle $C_{5s+l}$ of $W_{5s+l}$ and by proceeding as in Case 1.2 we can produce a secure vertex cover solution for $W_{5s+4}$.\\
So, from all the above cases the result follows.
\end{proof}
\section{Conclusion}
Graph pebbling and vertex cover are the two rapidly developing areas of research in graph theory. By combining graph pebbling and secure vertex cover, one can potentially model interesting real-world problems. So, in this paper, we combined the two graph invariants cover pebbling and secure vertex cover and obtained a new graph invariant called `secure vertex cover pebbling'. We proved the NP- completeness for  `secure vertex problem' and 'secure vertex cover problem. We also discussed some basic results which relate secure vertex cover pebbling and other graph invariants. The secure vertex cover pebbling number for complete graph $K_n$, complete $r$- partite graph $K_{p_1,p_2,...p_r}$, path $P_n$, friendship graph $F_n$ and wheel graph $W_n$ are also determined.\\
Given below are some open problems.
\begin{itemize}
\item The generalized problem of placing $m$ pebbles on the secure vertex cover set.\\
Let $\pi(G,D)$ be the minimum $m$ such that every distribution of $m$ pebbles on $G$ can reach $D$. For  a vertex $v$ and natural number $t$, let $v^{t}$ denote the target distribution with $t$ pebbles on $v$ and no pebbles anywhere else, and $\pi_t(G)= max_v\pi(G,v^{t})$. Suppose that $\alpha_{s}(G)=t$, then is it true that $f_{svcp}(G)\leq \pi_{t}(G)-t+1?$ It is conjectured in \cite{AlHu} that  $\pi(G,D) \le \pi_t(G)-t+1$ for all $G$ and $D$ with $supp(D) = t$.
\item Finding the secure vertex cover pebbling number for networks as well as for directed graphs.
\item Finding the class of families of graphs $G$ with $n$ vertices such that $f_{svcp}(G)=n$. 
\end{itemize}

\end{document}